\title{S-duality for surfaces with $A_n$-type singularities}
\date{}
\author{Yukinobu Toda}
\DeclareFontFamily{U}{rsfs}{%
\skewchar\font127}
\DeclareFontShape{U}{rsfs}{m}{n}{%
<-6>rsfs5<6-8.5>rsfs7<8.5->rsfs10}{}
\DeclareSymbolFont{rsfs}{U}{rsfs}{m}{n}
\DeclareRobustCommand*\rsfs{%
\@fontswitch\relax\mathrsfs}
\theoremstyle{plain}
\newtheorem{thm}{Theorem}[section]
\newtheorem{prop}[thm]{Proposition}
\newtheorem{lem}[thm]{Lemma}
\newtheorem{rmk}[thm]{Remark}
\newtheorem{cor}[thm]{Corollary}
\newtheorem{prop-defi}[thm]{Proposition-Definition}
\newtheorem{thm-defi}[thm]{Theorem-Definition}
\newtheorem{lem-defi}[thm]{Lemma-Definition}
\newtheorem{exam}[thm]{Example}
\newdimen\argwidth
\def\db[#1\db]{
 \setbox0=\hbox{$#1$}\argwidth=\wd0
 \setbox0=\hbox{$\left[\box0\right]$}
  \advance\argwidth by -\wd0
 \left[\kern.3\argwidth\box0 \kern.3\argwidth\right]}
\newcommand{\cC}{\mathcal{C}}
\newcommand{\lL}{\mathcal{L}}
\newcommand{\oO}{\mathcal{O}}
\newcommand{\Supp}{\mathop{\rm Supp}\nolimits}
\newcommand{\Hom}{\mathop{\rm Hom}\nolimits}
\newcommand{\dR}{\mathbf{R}}
\newcommand{\Hilb}{\mathop{\rm Hilb}\nolimits}
\newcommand{\ch}{\mathop{\rm ch}\nolimits}
\newcommand{\td}{\mathop{\rm td}\nolimits}
\newcommand{\Coh}{\mathop{\rm Coh}\nolimits}
\newcommand{\cneq}{\mathrel{\raise.095ex\hbox{:}\mkern-4.2mu=}}
\newcommand{\eqcn}{\mathrel{=\mkern-4.5mu\raise.095ex\hbox{:}}}
\newcommand{\Ex}{\mathop{\rm Ex}\nolimits}
\newcommand{\iPPer}{\mathop{\rm ^{-1}Per}\nolimits}
\newcommand{\pB}{\mathop{^{{p}}\mathcal{B}}\nolimits}
\newcommand{\iF}{\mathop{^{{-1}}\mathcal{F}}\nolimits}
\newcommand{\iT}{\mathop{^{{-1}}\mathcal{T}}\nolimits}
\newcommand{\Quot}{\mathop{\rm Quot}\nolimits}
\newcommand{\Ker}{\mathop{\rm Ker}\nolimits}
\newcommand{\length}{\mathop{\rm length}\nolimits}
\begin{document}
\maketitle

\begin{abstract}
We show that the generating series of 
Euler characteristics of Hilbert schemes of 
points on any algebraic 
surface with  
at worst $A_n$-type singularities is
described by the theta series determined by 
integer valued positive definite 
quadratic forms and the Dedekind eta 
function. In particular it is a
Fourier development of a meromorphic modular form
with possibly half integer weight. 
The key ingredient is to apply the flop transformation 
formula of Donaldson-Thomas type invariants
counting two dimensional torsion sheaves on 3-folds
proved in the author's previous paper. 
\end{abstract}

\section{Introduction}
\subsection{Background}
For an algebraic variety \footnote{In this paper, all 
the varieties are defined over $\mathbb{C}$.}$X$, the Hilbert scheme of 
$m$-points $\Hilb^m(X)$ is defined to be the moduli 
space of zero dimensional 
subschemes $Z \subset X$ such that 
the length of $\oO_Z$ equals to $m$. 
Its topological Euler characteristic
$\chi(\Hilb^m(X))$
has drawn much attention 
in connection with string theory. 
If $X$ is a (possibly singular) curve, then 
it is related to BPS state counting~\cite{PT3} and 
HOMFLY polynomials for links~\cite{SV}, \cite{MH}. 
If $X$ is a non-singular surface, then $\Hilb^m(X)$ is 
also non-singular and we have the 
remarkable formula by G\"ottsche~\cite{Got}
\begin{align}\label{intro:1}
\sum_{m\ge 0} \chi(\Hilb^m(X))q^{m-\frac{\chi(X)}{24}}
=\eta(q)^{-\chi(X)}. 
\end{align}
Here $\eta(q)$ is the Dedekind eta function
\begin{align}\label{eta}
\eta(q)=q^{\frac{1}{24}}\prod_{m\ge 1}(1-q^m). 
\end{align}
In particular, the generating series (\ref{intro:1})
is a Fourier development of a meromorphic modular form of weight $-\chi(X)/2$, 
which gives evidence of Vafa-Witten's S-duality conjecture~\cite{VW}
 in string theory. 
If $X$ is a smooth 3-fold, then $\chi(\Hilb^m(X))$
is related to the Donaldson-Thomas (DT) 
invariants~\cite{Thom}, \cite{MNOP}
and described in terms of 
MacMahon function~\cite{Li}, \cite{BBr}, \cite{LP}.  

Let $S$ be a \textit{singular} surface. 
In this case, $\chi(\Hilb^m(S))$ seems to 
be studied in few literatures\footnote{
In~\cite{GS3}, 
the weighted Euler characteristics of $\Hilb^m(S)$
for a K3 surface $S$
with $A_1$-type singularities is studied. 
The formula in~\cite[Example~3.26]{GS3}
involves Noether-Lefschetz numbers, 
and is different from ours in Theorem~\ref{thm:intro}.}.
Because of the singularities of $S$, 
the scheme $\Hilb^m(S)$ is no longer
non-singular and $\chi(\Hilb^m(S))$ reflects the 
complexity of the singularities of $S$. 
The behavior of the invariants 
$\chi(\Hilb^m(S))$ is more complicated than the smooth case, 
and it seems to be
difficult to see some good properties of their
generating series, e.g. 
the modularity.
Nevertheless we expect that the generating series of $\chi(\Hilb^m(S))$
has the modularity property 
as in the smooth case (\ref{intro:1}). 
This is motivated by a 3-fold version of the S-duality conjecture, 
stated as a modularity of the generating series of 
DT invariants on Calabi-Yau 3-folds 
counting two dimensional torsion 
sheaves on them with possibly singular supports. 
The purpose of this paper is to prove such a 
modularity for any singular surface $S$
with at worst $A_n$-type singularities, 
a simplest class of surface singularities.
It gives a first definitive result for 
the modularity of the generating series of $\chi(\Hilb^m(S))$
for a singular surface $S$.

\subsection{Main result}
Recall that 
an algebraic surface $S$
has an $A_n$-\textit{type singularity} at 
$p \in S$ if
the germ $(S, p)$ 
is analytically isomorphic to the affine singularity
\begin{align}\label{An}
A_n \cneq \{xy-z^{n+1}=0 : (x, y, z) \in \mathbb{C}^3\}
\end{align}
at the origin. 
The following is the main result
in this paper: 
\begin{thm}\label{thm:intro}
Let $S$ be a quasi-projective surface 
which is smooth except $A_{n_i}$-type singularities 
$p_i \in S$ for $1\le i\le l$. 
Then we have the following formula: 
\begin{align}\label{intro:2}
\sum_{m\ge 0} \chi(\Hilb^m(S))q^{m-\frac{\chi(\widetilde{S})}{24}}
=\eta(q)^{-\chi(\widetilde{S})} \cdot \prod_{i=1}^{l} \Theta_{n_i}(q). 
\end{align}
Here $\widetilde{S} \to S$ is the minimal resolution, 
and $\Theta_n(q)$ is defined by 
\begin{align}\label{Theta}
\Theta_n(q) \cneq \sum_{(k_1, \cdots, k_n) \in \mathbb{Z}^{n}}
q^{ \sum_{1\le i\le j \le n} k_i k_j}
e^{\frac{2\pi \sqrt{-1}}{n+2}(k_1 + 2k_2 + \cdots + nk_n)}. 
\end{align}
\end{thm}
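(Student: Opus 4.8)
The strategy is to express the left-hand side of (\ref{intro:2}) as a generating series of Donaldson--Thomas type invariants of two-dimensional torsion sheaves on a Calabi--Yau 3-fold and to evaluate it using the flop transformation formula. First I would reduce to a purely local statement. By the theory of power structures on the Grothendieck ring of varieties, the series $\sum_{m\ge 0}\chi(\Hilb^m(S))q^m$ equals the product, over any stratification of $S$, of the local punctual generating series raised to the Euler characteristic of the corresponding stratum; stratifying $S$ into its smooth locus $S^{\circ}$ and the points $p_1,\dots,p_l$ thus gives
\begin{align*}
\sum_{m\ge 0}\chi(\Hilb^m(S))q^m
=\Bigl(\prod_{k\ge 1}(1-q^k)^{-1}\Bigr)^{\chi(S^{\circ})}\cdot\prod_{i=1}^{l}Z_{n_i}(q),\qquad
Z_n(q)\cneq \sum_{m\ge 0}\chi(\Hilb^m_0(A_n))q^m,
\end{align*}
where $\Hilb^m_0(A_n)$ denotes the Hilbert scheme of length-$m$ subschemes of $A_n$ supported at the vertex, and the local series at a smooth point equals $\prod_{k\ge 1}(1-q^k)^{-1}$ by (\ref{intro:1}). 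As the exceptional locus of the minimal resolution of $A_n$ is a chain of $n$ $(-2)$-curves, one has $\chi(\widetilde S)=\chi(S^{\circ})+\sum_i(n_i+1)$, and after multiplying by $q^{-\chi(\widetilde S)/24}$ the theorem becomes equivalent to the single local identity
\begin{align}\label{eq:local}
Z_n(q)=\Theta_n(q)\cdot\prod_{k\ge 1}(1-q^k)^{-(n+1)}.
\end{align}

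To prove (\ref{eq:local}) I would realize $A_n$ as the hyperplane section $\{w=0\}$ of the generalized conifold
\begin{align*}
X_n\cneq \bigl\{xy=z^{n+1}+w^{n+1}\bigr\}=\bigl\{xy=\textstyle\prod_{j=0}^{n}(z-\zeta_jw)\bigr\}\subset\mathbb{C}^4,\qquad \zeta_j^{n+1}=-1,
\end{align*}
an isolated compound Du Val 3-fold singularity whose small crepant resolutions are connected by flops of the $n$ exceptional curves $C_1,\dots,C_n\cong\mathbb{P}^1$. There is a small resolution $\widetilde X_n\to X_n$ in which the proper transform of $A_n$ is its minimal resolution $\widetilde A_n\subset\widetilde X_n$, so that pushing forward along $\iota\colon\widetilde A_n\hookrightarrow\widetilde X_n$ identifies $\Hilb^m(\widetilde A_n)$ with a moduli space of stable two-dimensional sheaves on $\widetilde X_n$ (those of the form $\iota_\ast I_Z$, $Z\subset\widetilde A_n$ of length $m$). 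By (\ref{intro:1}) and $\chi(\widetilde A_n)=n+1$, the corresponding generating series of Euler characteristics is $\prod_{k\ge 1}(1-q^k)^{-(n+1)}$. The same construction performed in the ``singular chamber'' --- on $X_n$ itself, or in a birational model of $\widetilde X_n$ in which all of $C_1,\dots,C_n$ are contracted --- recovers $\Hilb^m(A_n)$, hence the series $Z_n(q)$.

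The key input is now the flop transformation formula for DT-type invariants of two-dimensional torsion sheaves proved in the author's previous paper. Applying it along the chain of flops that connects the two chambers above yields
\begin{align*}
Z_n(q)=\Bigl(\prod_{k\ge 1}(1-q^k)^{-(n+1)}\Bigr)\cdot G_n(q),
\end{align*}
where $G_n(q)$ is the product of the flop wall-crossing factors of $C_1,\dots,C_n$ and, by construction, depends only on a formal neighbourhood of the $A_n$-chain. Comparison with (\ref{eq:local}) reduces the theorem to the identification $G_n(q)=\Theta_n(q)$, which I expect to be the heart of the argument. The wall-crossing factors enumerate the sheaves $\iota_\ast(L\otimes I_Z)$ with $c_1(L)=\sum_i k_i[C_i]$ for $(k_1,\dots,k_n)\in\mathbb{Z}^n$; since the Chern character of such a sheaf depends quadratically on $(k_1,\dots,k_n)$ through $\bigl(\sum_i k_i[C_i]\bigr)^2$ and $-(C_i\cdot C_j)$ is the $A_n$ Cartan matrix, each $\mathbb{Z}^n$-summand carries a power of $q$ equal to a positive definite integral quadratic form of determinant $n+1$, weighted by a root of unity coming from the lower Chern character data and the lattice of curve classes of $\widetilde X_n$. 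Rewriting this quadratic form and character in the presentation of Definition (\ref{Theta}) by a linear change of variables, and carrying out the routine checks that the relevant Behrend functions are constant (so that the DT invariants coincide with the naive Euler characteristics) and that the various $q$-shifts match, gives $G_n(q)=\Theta_n(q)$ and completes the proof. The genuinely difficult step is this last explicit evaluation of the flop wall-crossing factor of the $A_n$-configuration, producing precisely the theta series (\ref{Theta}) with its quadratic form and its $(n{+}2)$-nd root of unity character.
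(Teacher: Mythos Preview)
Your reduction to the local identity $Z_n(q)=\Theta_n(q)\prod_{k\ge1}(1-q^k)^{-(n+1)}$ is correct and matches the paper. The local argument, however, diverges from the paper's and contains a genuine gap.

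The paper does not use the generalized conifold $\{xy=z^{n+1}+w^{n+1}\}$ with its chain of $n$ simple flops. It uses a \emph{single} flop of Reid's width $n{+}1$, modelled on $\{xy+z^2-w^{2(n+1)}=0\}$, whose unique exceptional curve $C$ meets a chosen smooth divisor $S$ transversally in one point; under the flop the strict transform $S^\dag$ acquires exactly one $A_n$-singularity and contains the flopped curve $C^\dag$. Since $S$ is smooth, G\"ottsche's formula computes that side, and the flop formula of~\cite{TodS} yields not $Z_n(q)$ directly but the two-variable series
\[
\prod_{m>0} f_{n+1}(q^m t)\prod_{m\ge 0} f_{n+1}(q^m t^{-1}),\qquad f_{n+1}(x)=1+x+\cdots+x^{n+1},
\]
encoding the Quot schemes of \emph{all} rank-one reflexive sheaves on $A_n$; the series $Z_n(q)$ is its $t^0$-coefficient. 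The identification with $\Theta_n(q)$ is then a combinatorial step: factor $f_{n+1}(x)=\prod_{i=1}^{n+1}(1-x\,\xi_{n+2}^{\,i})$, apply the Jacobi triple product to each linear factor, and extract the $t^0$-coefficient. The $(n{+}2)$-th root of unity in~(\ref{Theta}) arises from this factorization, not from any Chern-character computation.

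In your setup the comparison you want is between the smooth $\widetilde X_n$ and a ``chamber'' in which all of $C_1,\dots,C_n$ are contracted; but that is the singular $X_n$ itself, and the flop formula of~\cite{TodS} relates two \emph{smooth} 3-folds. No sequence of flops of $\widetilde X_n$ produces a smooth model in which the strict transform of $\widetilde A_n$ becomes the singular surface $A_n$, so the input $Z_n(q)$ never appears on either side of a flop identity as you describe. Even granting some extended wall-crossing, your heuristic for $G_n(q)$ does not account for why the character has order $n{+}2$ rather than the $n{+}1$ one would naively expect from the $A_n$ lattice; in the paper this $n{+}2$ is forced by the width of the Pagoda flop through the polynomial $f_{n+1}$, and turning the product formula into $\Theta_n(q)$ via Jacobi's identity is the substantive step, not a routine check.
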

By an elementary argument, 
we show that $\Theta_n(q)$ is 
a $\mathbb{Q}$-linear combination of the 
theta series determined by some 
integer valued positive definite 
quadratic forms 
on $\mathbb{Z}^n$ 
(cf.~Proposition~\ref{prop:modular} and Table~1 for small $n$). 
In particular, $\Theta_n(q)$ is a modular form of 
weight $n/2$, and we obtain the following corollary: 
\begin{cor}\label{cor:modular}
The generating series (\ref{intro:2})
is a Fourier development of a meromorphic 
modular form of weight $-\chi(S)/2$ 
for some congruence subgroup in 
$\mathrm{SL}_2(\mathbb{Z})$. 
\end{cor}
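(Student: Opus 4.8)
The plan is to deduce the corollary directly from Theorem \ref{thm:intro} together with the stated structure of $\Theta_n(q)$. The factor $\eta(q)^{-\chi(\widetilde S)}$ is, up to the overall shift $q^{-\chi(\widetilde S)/24}$ already built into the left-hand side of (\ref{intro:2}), a standard meromorphic modular form of weight $-\chi(\widetilde S)/2$ for $\mathrm{SL}_2(\mathbb{Z})$ with a multiplier system; its only poles are at the cusps, coming from the infinite product. So the first step is to record this, citing the classical transformation law of $\eta(q)$ under $\mathrm{SL}_2(\mathbb{Z})$.

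Next I would invoke Proposition \ref{prop:modular}: each $\Theta_{n_i}(q)$ is a $\mathbb{Q}$-linear combination of theta series $\vartheta_Q(q)=\sum_{v\in\mathbb{Z}^{n_i}} q^{Q(v)}$ attached to integer-valued positive definite quadratic forms $Q$ on $\mathbb{Z}^{n_i}$. By the classical theory of theta series of positive definite quadratic forms (Schoeneberg, or Shimura for the odd-rank case), $\vartheta_Q$ is a holomorphic modular form of weight $n_i/2$ on $\Gamma_0(N_Q)$ with character, where $N_Q$ is the level of $Q$; taking $N$ to be a common multiple of all the $N_Q$ that occur and of $4$ (to accommodate half-integral weights via the metaplectic/theta-multiplier formalism), each $\Theta_{n_i}$ is a holomorphic modular form of weight $n_i/2$ on $\Gamma_0(N)$. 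Hence the finite product $\prod_{i=1}^l \Theta_{n_i}(q)$ is a holomorphic modular form of weight $\sum_i n_i/2$ on $\Gamma_0(N)$, possibly with a character and, when $\sum_i n_i$ is odd, with the half-integral-weight multiplier.

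Finally I would combine the two factors. Restricting $\eta(q)^{-\chi(\widetilde S)}$ to the congruence subgroup $\Gamma_0(N)\cap\Gamma$, where $\Gamma$ is the group on which the $\eta$-multiplier becomes trivial (or simply passing to a deeper congruence subgroup on which all multiplier systems involved are trivial), the product in (\ref{intro:2}) becomes a meromorphic modular form, holomorphic on the upper half-plane and meromorphic at the cusps, of weight
\[
-\frac{\chi(\widetilde S)}{2}+\sum_{i=1}^l \frac{n_i}{2}=-\frac{\chi(S)}{2},
\]
where the last equality uses that the minimal resolution $\widetilde S\to S$ replaces each $A_{n_i}$ point by a chain of $n_i$ rational curves, so $\chi(\widetilde S)=\chi(S)+\sum_i n_i$. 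This is exactly the asserted weight, and the overall $q$-power shift $q^{-\chi(\widetilde S)/24}$ on the left of (\ref{intro:2}) is precisely the shift produced by $\eta(q)^{-\chi(\widetilde S)}$, so the left-hand side of (\ref{intro:2}) is the genuine Fourier expansion (at the cusp $i\infty$) of this modular form.

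The only real subtlety — and hence the main point to be careful about — is the bookkeeping of weights and multiplier systems when $\sum_i n_i$ is odd: one must phrase "modular form of half-integral weight" in the correct sense (Shimura's), check that the $\eta$-multiplier and the theta-multipliers are compatible on a sufficiently small congruence subgroup, and confirm that the product indeed transforms with the claimed weight $-\chi(S)/2$. The positive definiteness and integrality of the quadratic forms, which guarantees that the $\vartheta_Q$ are honest holomorphic modular forms (rather than mock or non-holomorphic objects), is supplied by Proposition \ref{prop:modular}, so no further analysis of the forms themselves is needed here.
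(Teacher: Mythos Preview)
Your proposal is correct and follows essentially the same approach as the paper: the paper simply states that Corollary~\ref{cor:modular} follows from Theorem~\ref{thm:intro} together with Proposition~\ref{prop:modular}, relying on the standard modularity of $\eta$ and of theta series attached to integer-valued positive definite quadratic forms, and the same weight computation $-\chi(\widetilde S)/2+\sum_i n_i/2=-\chi(S)/2$. Your treatment is in fact more detailed than the paper's, which does not spell out the bookkeeping of multiplier systems or the half-integral-weight issues you flag.
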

\subsection{Outline of the proof}
Here is an outline of the arguments:
in Section~\ref{sec:review}, 
we give a closed formula of 
the generating series of 
Euler characteristics of
rank one Quot schemes of 
points on $A_{n-1}$ in terms of an 
infinite product. 
Let $D \subset A_{n-1}$ be
the Weill divisor defined by 
\begin{align*}
D\cneq (x=z=0) \subset A_{n-1}
\end{align*}
in the notation of (\ref{An}). 
For $j\in \mathbb{Z}$, we denote
by $\oO_{A_{n-1}}(jD)$ the 
rank one reflexive sheaf
associated to the Weil divisor
$jD$. Note that any rank one reflexive sheaf on $A_{n-1}$ is 
isomorphic to $\oO_{A_{n-1}}(jD)$ for some $0\le j\le n-1$. 
Let
\begin{align}\label{Quotm0}
\Quot^{m}(\oO_{A_{n-1}}(jD))
\end{align}
be the Quot scheme 
which parametrizes the quotients $\oO_{A_{n-1}}(jD) \twoheadrightarrow Q$
where $Q$ is a zero dimensional coherent 
sheaf on $A_{n-1}$ with length $m$. 
Note that if $j=0$, the 
scheme (\ref{Quotm0}) coincides with 
$\Hilb^m(A_{n-1})$.
We will show the following formula
in Subsection~\ref{subsec:proof}:
\begin{align}\notag
&\sum_{\begin{subarray}{c}
0\le j\le n-1 \\
m\ge 0, k\in \mathbb{Z}
\end{subarray}}
\chi(\Quot^{m}(\oO_{A_{n-1}}(jD)))
q^{\frac{k^2n}{2}+\left(\frac{n}{2}-j \right)k+m}
t^{kn-j} \\
\label{Quotm}
&\hspace{50mm}=\prod_{m\in \mathbb{Z}_{>0}}
f_n(q^{m}t) \prod_{m\in \mathbb{Z}_{\ge 0}}
f_n(q^{m}t^{-1}). 
\end{align}
Here $f_n(x)$ is given by 
\begin{align}\label{defi:fn}
f_n(x)\cneq 1+ x+ \cdots + x^{n}.
\end{align}
The invariant $\chi(\Hilb^m(A_{n-1}))$ is 
obtained as a coefficient of $q^{m}t^0$ in the 
RHS of (\ref{Quotm}). 

Since $A_{n-1}$ is a toric surface, one may 
try to prove (\ref{Quotm}) via torus localization. 
It turns out that, by localization, the invariant 
$\chi(\Quot^m(\oO_{A_{n-1}}(jD)))$ coincides with the number of 
$n$-tuples of Young diagrams
satisfying a certain constraint
(cf.~Lemma~\ref{lem:Young}).  
However we are not able to 
prove
the formula (\ref{Quotm})
using the above combinatorial
description of $\chi(\Quot^m(\oO_{A_{n-1}}(jD)))$, 
nor find it in a literature, e.g. in the book~\cite{Stan}. 
The formula (\ref{Quotm}) is obtained by 
a rather indirect method:
it is a by-product of a flop transformation formula of 
DT type invariants counting two dimensional 
torsion sheaves on smooth
3-folds, established in the author's 
previous paper~\cite{TodS}.
The idea is as follows: 
let us consider a 3-fold flop 
$\phi \colon X \dashrightarrow X^{\dag}$ between 
smooth projective 3-folds, 
and a non-singular divisor $S \subset X$. 
Then it often happens that the strict transform 
$S^{\dag} \subset X^{\dag}$ of $S$ 
has singularities. 
We construct such a flop so that $S^{\dag}$ only 
has an $A_{n-1}$-type singularity. 
Then the flop formula in~\cite[Theorem~3.23]{TodS} compares invariants
counting rank one torsion free sheaves
on $S$ with those on $S^{\dag}$. As the former one 
is computed by (\ref{intro:1}), we obtain 
the formula which computes invariants 
counting rank one torsion free sheaves on $S^{\dag}$. 
Again using (\ref{intro:1}) for the smooth part of $S^{\dag}$, 
we obtain a contribution of the invariants 
from the singular point of $S^{\dag}$, 
which gives the formula (\ref{Quotm}). 

We note that the flop formula in~\cite{TodS}
relies on 
Bridgeland's equivalence of derived categories of
coherent sheaves under 3-fold
flops~\cite{Br1}, and the 
Hall algebra method
which is developed in recent years~\cite{JS}, \cite{K-S}, \cite{Tcurve1}, \cite{Tcurve2}, \cite{BrH}, \cite{Cala}. 
In turn, this 
indicates that the algebraic geometry involving
flops, 
derived categories and Hall algebras 
provides an interesting application 
to a study of enumerative combinatorics. 
Also it may be worth pointing out that the formula
(\ref{Quotm}) for $n=1$ together with (\ref{intro:1})
show that 
\begin{align*}
\sum_{k\in \mathbb{Z}}
q^{\frac{k^2}{2}+\frac{k}{2}}t^k
=\prod_{m\ge 1}(1-q^m) \prod_{m>0}(1+q^m t) \prod_{m\ge 0}(1+q^m t^{-1}). 
\end{align*}
The above formula is nothing but Jacobi triple product formula. 
It is surprising that the above classical result is 
also proved using 3-fold flops, derived categories, etc. 

In Section~\ref{sec:proof}, we prove 
Theorem~\ref{thm:intro}.
By a standard argument, the result is 
reduced to the case of $S=A_n$. 
In this case, the result 
follows by working with the formula (\ref{Quotm})
using Jacobi triple product formula. 
After that, we show the 
modularity of the series $\Theta_n(q)$ by 
describing $\Theta_n(q)$ as a $\mathbb{Q}$-linear 
combination of the theta series determined  by integer
valued positive definite quadratic forms. 
In Section~\ref{sec:Append}, as an appendix, 
we provide a combinatorial 
description of $\chi(\Quot^m(\oO_{A_{n-1}}(jD)))$
in terms of $n$-tuples of Young diagrams. 

The idea in this paper using the flop 
formula has possibilities to be applied 
for other surface singularities, 
but
we leave them for a future work. 
\subsection{Acknowledgment}
This work is supported by World Premier 
International Research Center Initiative
(WPI initiative), MEXT, Japan. This work is also supported by Grant-in Aid
for Scientific Research grant (22684002)
from the Ministry of Education, Culture,
Sports, Science and Technology, Japan.

\begin{table}[h]\label{table}
\caption{Descriptions of $\Theta_n(q)$ for $1\le n\le 4$}
\begin{align*}
&\Theta_1(q)=-\frac{1}{2}\sum_{k\in \mathbb{Z}} q^{k^2}
+\frac{3}{2}\sum_{k\in \mathbb{Z}} q^{9k^2}
 \\
&\Theta_2(q)=
-\sum_{(k_1, k_2) \in \mathbb{Z}^2}
q^{3k_1^2 + k_2^2 }
+2\sum_{(k_1, k_2) \in \mathbb{Z}^2}
q^{3k_1^2 + 4k_2^2}
 \\
&\Theta_3(q)=
-\frac{1}{4} \sum_{(k_1, k_2, k_3) \in \mathbb{Z}^3}
q^{k_1^2 + k_2^2 + k_3^2 + k_1 k_2 + k_1 k_3 + k_2 k_3} \\
&\hspace{40mm}
+\frac{5}{4} \sum_{(k_1, k_2, k_3) \in \mathbb{Z}^3}
q^{25k_1^2 +3k_2^2 + 7k_3^2 -15 k_1 k_2 -25 k_1 k_3 + 8k_2 k_3} \\
&\Theta_4(q)=\frac{1}{2}\sum_{(k_1, k_2, k_3, k_4) \in \mathbb{Z}^4}
q^{k_1^2 + k_2^2 + k_3^2 + k_4^2 +k_1 k_2 + k_2 k_3 + k_3 k_4 + k_1 k_3 + k_1 k_4 + k_2 k_4}\\
&-\sum_{(k_1, k_2, k_3, k_4) \in \mathbb{Z}^4}
q^{4k_1^2+3k_2^2 +7k_3^2+13k_4^2 -6 k_1 k_2 + 8k_2 k_3 + 18 k_3 k_4
-10 k_1 k_3 -14 k_1 k_4 + 11k_2 k_4} \\
&-\frac{3}{2}\sum_{(k_1, k_2, k_3, k_4) \in \mathbb{Z}^4}
q^{9k_1^2+3k_2^2 +7k_3^2+13k_4^2 -9 k_1 k_2 + 8k_2 k_3 + 18 k_3 k_4
-15 k_1 k_3 -21 k_1 k_4 + 11k_2 k_4} \\
&+3\sum_{(k_1, k_2, k_3, k_4) \in \mathbb{Z}^4}
q^{36k_1^2+3k_2^2 +7k_3^2+13k_4^2 -18 k_1 k_2 + 8k_2 k_3 + 18 k_3 k_4
-30 k_1 k_3 -42 k_1 k_4 + 11k_2 k_4}. 
\end{align*}
\end{table}

\section{Euler characteristics of Quot schemes of points on $A_{n-1}$}\label{sec:review}
\subsection{3-fold flops}
This subsection is devoted to a preliminary of 
the proof of the formula (\ref{Quotm}). 
We first
fix a 3-fold flop whose exceptional 
locus has \textit{width} $n$ in the sense of~\cite{Rei}, 
satisfying some properties. 
\begin{lem}\label{lem:flop}
For each $n\ge 1$, there 
exist smooth projective 3-folds 
$X$, $X^{\dag}$ and a flop diagram 
\begin{align}\label{intro:flop}
\xymatrix{
(C\subset X) \ar[dr]_{f}  \ar@{.>}[rr]^{\phi} &  & (X^{\dag} \supset C^{\dag})
 \ar[dl]^{f^{\dag}} \\
&  (p\in Y).  &
}
\end{align}
satisfying the following conditions: 
\begin{itemize}
\item There is a Zariski open neighborhood
$p\in U \subset Y$ which is isomorphic to the affine variety 
\begin{align}\label{affine:n}
\{ xy+z^2 -w^{2n}=0 : (x, y, z, w) \in \mathbb{C}^4\}. 
\end{align}
In particular, the exceptional locus of $f, f^{\dag}$ 
are irreducible rational curves $C, C^{\dag}$ 
which are contracted to
$p\in Y$. 
\item There is an irreducible 
smooth divisor $S\subset X$ 
such that $S \cap C$
is scheme theoretically 
one point, and\footnote{This last condition 
is required to make the computations of the Mukai 
vectors in Subsection~\ref{subsec:apply} simpler, and 
not essential.} $S^2=S^3=0$. 
\item The strict transform $S^{\dag} \subset X^{\dag}$
of $S$ contains $C^{\dag}$, 
 has a $A_{n-1}$-type singularity at a point $o\in S^{\dag}$, 
and $S^{\dag} \setminus \{o\}$ is smooth. 
\end{itemize} 
\end{lem}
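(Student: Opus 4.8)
\emph{Strategy.} I would prove the lemma by exhibiting an explicit local model for the flop together with the divisor, and then globalizing by a standard patching argument; all the substance is in the local model. Write $W_0$ for the affine variety~(\ref{affine:n}), which is singular only at the origin $0$, where its equation reads $xy=(w^n-z)(w^n+z)$. A general hyperplane section of $W_0$ through $0$ is an $A_{2n-1}$ surface singularity, so $W_0$ is the standard local model of a flopping contraction whose exceptional curve has width $n$ in the sense of~\cite{Rei}. The two small resolutions of $W_0$ are obtained by separating the two Weil divisorial branches of $\divv(x)$: one of them is
\[
\widetilde{W}_0=\bigl\{\,([a:b],(x,y,z,w))\in\mathbb{P}^1\times\mathbb{C}^4 : xb=(w^n-z)a,\ (w^n+z)b=ya\,\bigr\},
\]
and $\widetilde{W}_0^{\dag}$ is defined by the same equations with $(w^n-z)$ and $(w^n+z)$ interchanged. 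Working in the two evident affine charts one checks that $\widetilde{W}_0$ is smooth, that $f\colon\widetilde{W}_0\to W_0$ is crepant and an isomorphism over $W_0\setminus\{0\}$ with fibre over $0$ an irreducible rational curve $C\cong\mathbb{P}^1$ of normal bundle $\oO_{\mathbb{P}^1}\oplus\oO_{\mathbb{P}^1}(-2)$ (resp.\ $\oO_{\mathbb{P}^1}(-1)^{\oplus 2}$ when $n=1$); the same holds for $\widetilde{W}_0^{\dag}$, and $\widetilde{W}_0\dashrightarrow\widetilde{W}_0^{\dag}$ over $W_0$ is the flop of $C$.

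\emph{The local divisor.} Next I would take inside $W_0$ the Weil divisor $\bar{S}\cneq\{\,x=0,\ z+w^n=0\,\}$, one of the two branches of $\divv(x)$; it is isomorphic to $\mathbb{C}^2$. In the chart $b\ne 0$ of $\widetilde{W}_0$, with coordinates $(a,y,w)$ (so $z=ya-w^n$ and $x=(2w^n-ya)a$), the strict transform of $\bar{S}$ is $\{a=0\}\cong\mathbb{C}^2$, which is smooth and meets $C=\{y=w=0\}$ scheme-theoretically in the single reduced point $\{a=y=w=0\}$. In the corresponding chart of $\widetilde{W}_0^{\dag}$, with coordinates $(c,y,w)$ (so $z=w^n-yc$ and $x=(2w^n-yc)c$), the strict transform of $\bar{S}$ is the irreducible surface $\{yc=2w^n\}$, which is isomorphic to an $A_{n-1}$ surface singularity, contains $C^{\dag}=\{y=w=0\}$, has its unique singular point $o$ on $C^{\dag}$, and is smooth away from $o$ (for $n=1$ the surface $\{yc=2w\}$ is smooth, consistently with $A_0$ being a smooth point). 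This comparison of the two strict transforms of $\bar{S}$ is the heart of the lemma.

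\emph{Globalization.} To finish I would embed the local flop into smooth projective $3$-folds. First realize $W_0$ as a Zariski-open subset of a normal projective $3$-fold $Y$ whose only singular point is the origin of $W_0$; this is a routine patching argument (compactify $W_0$ and modify the boundary along its smooth locus, or write down a direct projective model). A small crepant contraction of $3$-folds is a flopping contraction, so a smooth projective resolution $f\colon X\to Y$ as above exists, with exceptional locus a single $C\cong\mathbb{P}^1$ lying locally as in the model, and the flop $f^{\dag}\colon X^{\dag}\to Y$ exists globally. Then I would extend $\bar{S}$ to a smooth irreducible divisor $S\subset X$ — a general smooth member of a suitable linear system which near $C$ restricts to $\bar{S}$, so that $S\cap C$ is one reduced point — and, if desired, arrange $S^2=S^3=0$ by choosing $Y$, the extension of $\bar{S}$, and $X$ so that $\oO_X(S)|_S$ is numerically trivial; by the footnote this last normalization is only a convenience. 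Since $\phi$ is an isomorphism away from $C\cup C^{\dag}$, the strict transform $S^{\dag}\subset X^{\dag}$ agrees with $S$ outside a neighbourhood of $C^{\dag}$ and with the local surface $\{yc=2w^n\}$ near $C^{\dag}$; hence $S^{\dag}$ contains $C^{\dag}$, has an $A_{n-1}$-type singularity at $o\in C^{\dag}$, and is smooth elsewhere. All the asserted properties then hold.

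\emph{Main obstacle.} The genuine mathematical input is the short local computation of the two strict transforms of $\bar{S}$; I expect the main difficulty to be the bookkeeping in the globalization, i.e.\ producing a single smooth projective $X$ that simultaneously realizes the width-$n$ flop, carries a globally smooth $S$ meeting $C$ in exactly one point, has $S^{\dag}$ with no singularities beyond the $A_{n-1}$ point, and satisfies $S^2=S^3=0$. The cleanest resolutions are either to build $X$ with enough freedom to force $\oO_X(S)|_S$ numerically trivial, or simply to drop the inessential condition $S^2=S^3=0$ (footnote) and track the resulting extra terms through Subsection~\ref{subsec:apply}.
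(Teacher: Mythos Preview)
Your local analysis is correct and is exactly the route the paper itself mentions first (``directly checked by describing the birational map $\phi$ on each affine chart''); the paper then records the Pagoda alternative, but your chart computation showing the strict transform on the $X^{\dag}$ side is $\{yc=2w^n\}\cong A_{n-1}$ is the same content. So for the third bullet, and for the construction of the flop itself, you and the paper agree.

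Where you fall short of a complete proof is the second bullet, specifically $S^2=S^3=0$. You correctly identify this as the main obstacle but do not actually produce a mechanism; ``choose $Y$, the extension of $\bar S$, and $X$ so that $\oO_X(S)|_S$ is numerically trivial'' is a wish, not a construction. The paper resolves this with a concrete and reusable trick that you should absorb: starting from any divisor $T\subset X$ with $T\cap C$ a single reduced point (your $\bar S$ extended), pass to $T+f^{\ast}H_Y$ for $H_Y$ ample so that the system is base-point-free and very ample, pick two general members $T_1,T_2$, and blow up $X$, $X^{\dag}$, $Y$ along the smooth curve $T_1\cap T_2$ (which misses $C$, $C^{\dag}$, $p$). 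After this blow-up the pencil $\langle T_1,T_2\rangle$ becomes base-point-free, so the strict transform of $T_1$ is a fibre of a morphism to $\mathbb{P}^1$; hence $S^2=S^3=0$ automatically, while $S\cap C$ remains a single reduced point and the local picture near $C$, $C^{\dag}$ is untouched. This simultaneously handles global smoothness of $S$ (Bertini) and the numerical triviality you wanted, without any ad hoc choices. With this one addition your argument would be complete and essentially identical to the paper's.
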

\begin{proof}
We take 
$Y$ to be a projective compactification
of the affine variety (\ref{affine:n}) 
which is smooth outside $0 \in \mathbb{C}^4$. 
We take
a flop diagram (\ref{intro:flop})
by blowing up at the 
Weil divisors on $Y$, given by 
the closures of the subschemes
\begin{align*}
(x=z+w^n=0) \subset U, \quad (x=z-w^n=0) \subset U
\end{align*}
respectively. 
 By the construction, there exists a divisor $T \subset X$
with $T\cap C$ scheme theoretically one point. 
Let $H_Y$ be a sufficiently ample divisor on $Y$. 
Then $T + f^{\ast}H_Y$ is ample and globally generated
by the base point free theorem. 
Let 
\begin{align*}
T_1, T_2 \in \lvert T + f^{\ast}H_Y \rvert
\end{align*}
be general members. We 
replace $X, X^{\dag}, Y$ by blow-ups at $T_1 \cap T_2$
which is smooth and lies outside $C, C^{\dag}, p$
respectively.  
Then by setting $S$ to be the 
connected component of the strict 
transform of $T_1$ which 
intersects with $C$, we obtain a 
diagram (\ref{intro:flop}) satisfying the first and the second conditions. 

The last statement
can be directly checked by 
describing the birational map $\phi$ on each 
affine charts of crepant resolutions of (\ref{affine:n}). 
An alternative geometric argument is as follows: 
by~\cite{Rei}, 
the birational map $\phi$
is given by the Pagoda diagram, 
\begin{align}\label{Pagoda}
X \stackrel{f_1}{\leftarrow} X_1 \stackrel{f_2}{\leftarrow}
 \cdots \stackrel{f_{n-1}}{\leftarrow} 
X_{n-1} \stackrel{f_n}{\leftarrow} X_n \stackrel{f_n^{\dag}}{\to}
 X_{n-1}^{\dag} \stackrel{f_{n-1}^{\dag}}{\to} \cdots 
\stackrel{f_2^{\dag}}{\to} X_1^{\dag} 
\stackrel{f_1^{\dag}}{\to} 
X^{\dag}. 
\end{align}
Here $f_i$, $f_i^{\dag}$ for $1\le i\le n-1$
are blow-ups at 
$(0, -2)$-curves, and $f_{n}$, $f_n^{\dag}$ are blow-ups
at $(-1, -1)$-curves. 
Hence 
the 
birational map $S \dashrightarrow S^{\dag}$
decomposes into 
\begin{align*}
S=S_0 \stackrel{g_1}{\leftarrow} S_1 \stackrel{g_2}{\leftarrow}
 \cdots \stackrel{g_{n-1}}{\leftarrow} 
S_{n-1} \stackrel{g_n}{\leftarrow} S_n \stackrel{g_n^{\dag}}{\to}
 S_{n-1}^{\dag} \stackrel{g_{n-1}^{\dag}}{\to} \cdots 
\stackrel{g_2^{\dag}}{\to} S_1^{\dag} 
\stackrel{g_1^{\dag}}{\to} S_0^{\dag}=
S^{\dag}. 
\end{align*}
Here each $g_i \colon S_i \to S_{i-1}$ is a blow-up 
at a point in $\Ex(g_{i-1}) \setminus 
g_{i-1 \ast}^{-1}\Ex(g_{1} \circ 
\cdots \circ g_{i-2})$, where $g_{i-1 \ast}^{-1}$ is the strict 
transform. 
The exceptional locus of $S_n \to S$ is a $A_{n-1}$-configuration of 
$(-2)$-curves together with a tail of a $(-1)$-curve, given by 
$\Ex(g_n)$. 
The birational morphism $S_n \to S^{\dag}$
contracts the above $A_{n-1}$-configuration
of $(-2)$-curves on $S_n$ to 
a $A_{n-1}$-singularity $o\in S^{\dag}$, and 
the image of the tail $\Ex(g_n)$ 
coincides with $C^{\dag}$. 
\end{proof}
In what follows, we fix a flop diagram (\ref{intro:flop}). 
We next describe rank one torsion free sheaves on $S$ and $S^{\dag}$:
\begin{lem}\label{lem:sheaves}
(i) An object $E \in \Coh(S)$ is a rank 
one torsion free sheaf with trivial determinant 
on $S \setminus C$ if and only
if $E$ is an ideal sheaf $I_Z$ for some 
zero dimensional subscheme $Z \subset S$. 

(ii) 
An object $E \in \Coh(S^{\dag})$
is a rank one torsion free sheaf 
if and only if it fits into the exact sequence
\begin{align}\label{ELQ}
0 \to E \to \lL \to Q \to 0
\end{align}
where $\lL$ is a rank one reflexive 
sheaf on $S^{\dag}$, $Q$ is a zero dimensional 
sheaf on $S^{\dag}$. 
Moreover $E$ has a trivial determinant on 
$S^{\dag}\setminus C^{\dag}$ if and only if 
$\lL$ is of the form $\oO_{S^{\dag}}(jC^{\dag})$
for some $j\in \mathbb{Z}$. 
\end{lem}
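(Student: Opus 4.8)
The plan is to pass from each torsion free sheaf to its double dual and then read off the determinant from the divisor class group of the (normal) surface in question. For (i), recall that $S$ is a \emph{smooth} surface and $S\setminus C=S\setminus\{p_0\}$, where $p_0\cneq S\cap C$ is a single reduced point. Given a rank one torsion free sheaf $E\in\Coh(S)$, the canonical map $E\hookrightarrow E^{\vee\vee}$ is injective; since reflexive sheaves on a smooth surface are locally free, $L\cneq E^{\vee\vee}$ is a line bundle (representing $\det E$ in the rank one case), and $L/E$ is supported in codimension $\ge 2$, hence is zero dimensional. Thus $E\cong L\otimes I_Z$ for some zero dimensional subscheme $Z\subset S$. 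If the determinant of $E$ is trivial on $S\setminus\{p_0\}$, then $L|_{S\setminus\{p_0\}}\cong\oO$, which forces $L\cong\oO_S$ because $\Pic(S)\to\Pic(S\setminus\{p_0\})$ is an isomorphism (a line bundle extends uniquely over a codimension two point on a smooth variety). Hence $E\cong I_Z$. Conversely $I_Z\subset\oO_S$ is rank one torsion free with $\det I_Z\cong\oO_S$, which is trivial on $S\setminus C$.

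For the first assertion of (ii), I would set $\lL\cneq E^{\vee\vee}$ and $Q\cneq\lL/E$. The sheaf $\lL$ is reflexive of rank one, and $E\hookrightarrow\lL$ is injective. Since the $A_{n-1}$-singularity is normal and $S^{\dag}$ is smooth away from it, $S^{\dag}$ is normal, hence regular in codimension one; so at every codimension one point the stalk of $E$ is a torsion free module over a discrete valuation ring, therefore free and in particular reflexive. Consequently $Q$ is supported in dimension zero, which gives (\ref{ELQ}). For the converse, if $0\to E\to\lL\to Q\to 0$ with $\lL$ rank one reflexive (hence torsion free) and $Q$ zero dimensional, then $E$ is a subsheaf of $\lL$ and so is torsion free, with $\rk E=\rk\lL-\rk Q=1$.

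For the determinant statement in (ii), I would invoke the standard identification on a normal surface of isomorphism classes of rank one reflexive sheaves with $\Cl(S^{\dag})$ via $D\mapsto\oO_{S^{\dag}}(D)$. The open set $S^{\dag}\setminus C^{\dag}$ lies in the smooth locus of $S^{\dag}$ (by the Pagoda description in the proof of Lemma~\ref{lem:flop}, $C^{\dag}$ is the image of the $(-1)$-tail and passes through $o$), so there $E$ restricts to a rank one torsion free sheaf with double dual $\lL|_{S^{\dag}\setminus C^{\dag}}$; hence $E$ has trivial determinant on $S^{\dag}\setminus C^{\dag}$ if and only if $\lL|_{S^{\dag}\setminus C^{\dag}}\cong\oO$. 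Since $C^{\dag}$ is an irreducible curve in $S^{\dag}$ (Lemma~\ref{lem:flop}), the excision exact sequence $\mathbb{Z}\to\Cl(S^{\dag})\to\Cl(S^{\dag}\setminus C^{\dag})\to 0$, sending $1$ to the class of $C^{\dag}$, shows that a rank one reflexive sheaf is trivial away from $C^{\dag}$ exactly when its class lies in $\mathbb{Z}\cdot[C^{\dag}]$, i.e. when it is isomorphic to $\oO_{S^{\dag}}(jC^{\dag})$ for some $j\in\mathbb{Z}$. Combined with the previous sentence this proves the claim.

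The arguments are essentially formal, so there is no single hard step; the only points requiring care are the use of normality of $S^{\dag}$ to ensure that the cokernel $\lL/E$ lies in dimension zero \emph{even at the singular point} $o$, and the combination of the reflexive-sheaf/class-group dictionary with the excision sequence cutting out $C^{\dag}$. The geometric inputs needed there — that $C^{\dag}$ is an irreducible curve passing through $o$ — are already recorded in Lemma~\ref{lem:flop}.
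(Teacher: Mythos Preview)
Your proof is correct and follows essentially the same approach as the paper: pass to the double dual $E\hookrightarrow E^{\vee\vee}$, use normality of $S^{\dag}$ to conclude the cokernel is zero dimensional, and read off the determinant condition from the class group. The paper's own proof is extremely terse (it simply invokes a ``well-known argument'' and declares the determinant statement ``obvious''), so your write-up supplies exactly the details --- the $\Pic(S)\cong\Pic(S\setminus\{p_0\})$ step for (i), the DVR argument at codimension one points for (ii), and the excision sequence for $\Cl(S^{\dag})$ --- that the paper omits.
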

\begin{proof}
The result follows from a well-known argument. 
As for (ii), let 
$E \in \Coh(S^{\dag})$ be a rank one 
torsion free sheaf. 
We have the exact sequence in $\Coh(S^{\dag})$
\begin{align*}
0 \to E \to E^{\vee \vee} \to Q \to 0. 
\end{align*}
Since $S^{\dag}$ is normal, $Q$ is a zero dimensional 
sheaf. By setting $\lL=E^{\vee \vee}$, 
we obtain the exact sequence (\ref{ELQ}).
Conversely if $E$ fits into (\ref{ELQ}), 
then 
obviously $E$ is a rank one torsion free sheaf. 
The last assertion is also obvious. 
\end{proof}

\subsection{Application of the flop formula}\label{subsec:apply}
Let us consider a flop diagram (\ref{intro:flop}). 
We denote by $i$, $i^{\dag}$ the 
closed embeddings $S \subset X$, $S^{\dag} \subset X^{\dag}$
respectively, and 
fix an ample divisor $\omega$ on $Y$. 
The flop transformation formula 
of DT type invariants in~\cite{TodS}
compares 
invariants counting 
$f^{\ast}\omega$-semistable torsion sheaves on $X$
supported on $S$
with 
those counting 
$f^{\dag \ast}\omega$-semistable torsion sheaves on $X^{\dag}$
supported on $S^{\dag}$. 
For $\beta \in H_2(X)$ and $\gamma\in \mathbb{Q}$, 
let $M_{\beta, \gamma}(S)$ be the moduli space of 
rank one torsion free sheaves $E$ on $S$ 
such that the Mukai vector of $i_{\ast}E$ satisfies
\begin{align}\label{Mukai}
\ch(i_{\ast}E)\sqrt{\td_X}&=(0, S, -\beta, -\gamma) \\
\notag
&\in H^0(X) 
\oplus H^2(X) \oplus H^4(X) \oplus H^6(X). 
\end{align}
Here we have identified $H^4(X)$, $H^6(X)$
with $H_2(X)$, $\mathbb{Q}$ by the
Poincar\'e duality. 
We note that the 
$f^{\ast}\omega$-semistable 
sheaves on $X$ supported $S$ 
with Mukai vector $(0, S, -\beta, -\gamma)$
coincide with the sheaves $i_{\ast}E$\
for $[E] \in M_{\beta, \gamma}(S)$.
The similar statement also holds for $f^{\dag \ast}\omega$-semistable
sheaves on $X^{\dag}$ supported on $S^{\dag}$. 
Therefore in this situation, 
 the flop formula in~\cite[Theorem~3.23 (ii)]{TodS} is described as 
\begin{align}\notag
\sum_{\beta^{\dag} \in H_2(X^{\dag}), \gamma\in \mathbb{Q}}
\chi(M_{\beta^{\dag}, \gamma}(S^{\dag}))q^{\gamma} t^{\beta^{\dag}}
&= \sum_{\beta\in H_2(X), \gamma\in \mathbb{Q}} 
\chi(M_{\beta, \gamma}(S))q^{\gamma} t^{\phi_{\ast}\beta} \\
&\label{flop:form}\cdot q^{\frac{n}{12}}t^{\frac{n}{2}C^{\dag}} \prod_{m\in \mathbb{Z}_{>0}}
f_n(q^m t^{C^{\dag}}) \prod_{m\in \mathbb{Z}_{\ge 0}} f_n(q^m t^{-C^{\dag}}). 
\end{align}
Here $f_n(x)$ is the polynomial (\ref{defi:fn}). 
The formula (\ref{flop:form})
also holds after replacing $M_{\beta, \gamma}(S)$, $M_{\beta^{\dag}, \gamma}(S^{\dag})$
by the subschemes
\begin{align*}
M_{\beta, \gamma}'(S) \subset 
M_{\beta, \gamma}(S), \ M_{\beta^{\dag}, \gamma}'(S^{\dag})
\subset M_{\beta^{\dag}, \gamma}(S^{\dag})
\end{align*}
consisting of $[E] \in M_{\beta, \gamma}(S)$, $[E^{\dag}] \in M_{\beta^{\dag}, \gamma}(S^{\dag})$ which 
have \textit{trivial determinants}
 on $S \setminus C$, $S^{\dag} \setminus C^{\dag}$
respectively. 
(Indeed in the proof of~\cite[Theorem~3.23]{TodS}, 
it is enough to notice that 
$E \in \pB_{f^{\ast}\omega}^{\mu, S}$
has a trivial determinant on $S \setminus C$
if and only the same holds for $F_2 \in \cC_{f^{\ast}\omega}^{\mu, S}$.)
By Lemma~\ref{lem:sheaves} (i), 
the objects which contribute to $\chi(M_{\beta, \gamma}'(S))$ are 
of the form
\begin{align}\label{IZ}
I_Z \subset \oO_S, \quad Z \subset S
\end{align}
where $Z$ is a zero dimensional subscheme and
$I_Z$ is the ideal sheaf of $Z$. 
Also by Lemma~\ref{lem:sheaves} (ii), the 
objects which contribute to 
$\chi(M_{\beta^{\dag}, \gamma}'(S^{\dag}))$ are of the form
\begin{align}\label{Quo}
\Ker(\oO_{S^{\dag}}(lC^{\dag}) \twoheadrightarrow Q), \quad
 l\in \mathbb{Z}
\end{align}
where $Q$ is a zero dimensional sheaf on $S^{\dag}$. 
We need to compute the 
 Mukai vectors of the push-forward of (\ref{IZ}), (\ref{Quo})
to $X$, $X^{\dag}$. 
As for (\ref{IZ}), 
it is easily computed as 
\begin{align*}
\left(0, S, \frac{c_1(X)}{4}S, \frac{c_1(X)^2}{96}S + 
\frac{c_2(X)}{24}S -\lvert Z \rvert   \right)
\end{align*}
using
the condition $S^2=S^3=0$ in Lemma~\ref{lem:flop}, 
the resolution 
\begin{align*}
0 \to \oO_X(-S) \to \oO_X \to \oO_S \to 0
\end{align*}
and
\begin{align}\label{tdX}
\sqrt{\mathrm{td}_X}=
\left(1, \frac{c_1(X)}{4}, \frac{c_2(X)}{24} + \frac{c_1(X)^2}{96}, 
\frac{c_1(X) c_2(X)}{96}-\frac{c_1(X)^3}{384}\right). 
\end{align}
As for (\ref{Quo}), it requires some more 
arguments: 
\begin{lem}\label{lem:Mukai}
For $0\le j\le n-1$, we have 
\begin{align*}
\ch(i_{\ast}^{\dag}\oO_{S^{\dag}}(jC^{\dag}))
=\left(0, S^{\dag}, 
\left( j-\frac{n}{2} \right)C^{\dag}, 
-\frac{n}{6}   \right). 
\end{align*}
\end{lem}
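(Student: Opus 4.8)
The plan is to compute the Chern character on the minimal resolution of $S^{\dag}$ and then descend by Grothendieck--Riemann--Roch. Let $\mu\colon\widetilde{S^{\dag}}\to S^{\dag}$ be the minimal resolution; by the Pagoda description in the proof of Lemma~\ref{lem:flop} we may take $\widetilde{S^{\dag}}=S_n$, so that the $\mu$-exceptional locus is an $A_{n-1}$-chain of $(-2)$-curves $E_1,\dots,E_{n-1}$ and the strict transform $\hat C$ of $C^{\dag}$, which is the $(-1)$-curve $\Ex(g_n)$, is attached transversally to one end $E_1$ of the chain and is disjoint from $E_2,\dots,E_{n-1}$. Solving $(\mu^{\ast}C^{\dag})\cdot E_i=0$ for all $i$ gives
\[
\mu^{\ast}C^{\dag}=\hat C+\sum_{i=1}^{n-1}\frac{n-i}{n}E_i.
\]
Since $A_{n-1}$ is a canonical (hence rational) singularity, $\oO_{S^{\dag}}(jC^{\dag})=\mu_{\ast}\oO_{\widetilde{S^{\dag}}}(D_j)$ for $D_j\cneq\lfloor j\mu^{\ast}C^{\dag}\rfloor=j\hat C+\sum_i\lfloor j(n-i)/n\rfloor E_i$, and $R^1\mu_{\ast}\oO_{\widetilde{S^{\dag}}}(D_j)=0$. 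Setting $\psi\cneq i^{\dag}\circ\mu$, one has $i^{\dag}_{\ast}\oO_{S^{\dag}}(jC^{\dag})=R\psi_{\ast}\oO_{\widetilde{S^{\dag}}}(D_j)$, so Grothendieck--Riemann--Roch for the projective morphism $\psi$ between smooth varieties yields
\[
\ch(i^{\dag}_{\ast}\oO_{S^{\dag}}(jC^{\dag}))=\td_{X^{\dag}}^{-1}\cdot\psi_{\ast}\bigl(\ch(\oO_{\widetilde{S^{\dag}}}(D_j))\cdot\td_{\widetilde{S^{\dag}}}\bigr).
\]

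Next I would expand the right-hand side and push forward term by term. The $H^0$-component vanishes since the sheaf is torsion, and the $H^2$-component is $\psi_{\ast}(1)=[S^{\dag}]$. For the $H^4$-component, using $\mu_{\ast}D_j=j[C^{\dag}]$, the crepancy $K_{\widetilde{S^{\dag}}}=\mu^{\ast}K_{S^{\dag}}$, and adjunction $K_{S^{\dag}}=(K_{X^{\dag}}+S^{\dag})|_{S^{\dag}}$, the $K_{X^{\dag}}$-terms cancel against the degree-one part $\tfrac12 K_{X^{\dag}}$ of $\td_{X^{\dag}}^{-1}$, leaving $\ch_2=j[C^{\dag}]-\tfrac12(S^{\dag})^2$. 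For the $H^6$-component, one first notes from the displayed expression for $\mu^{\ast}C^{\dag}$, together with $\hat C^2=-1$, $\hat C\cdot K_{\widetilde{S^{\dag}}}=-1$, $E_i^2=-2$ and $E_i\cdot K_{\widetilde{S^{\dag}}}=0$, that $D_j^2=-j=D_j\cdot K_{\widetilde{S^{\dag}}}$ for every $j$; feeding this in, together with Noether's formula $\chi(\oO_{\widetilde{S^{\dag}}})=\tfrac1{12}(K_{\widetilde{S^{\dag}}}^2+c_2(\widetilde{S^{\dag}}))$ and $\chi(\oO_{\widetilde{S^{\dag}}})=\chi(\oO_{S^{\dag}})$, and comparing with Riemann--Roch for $\oO_{S^{\dag}}$ on the smooth $3$-fold $X^{\dag}$ (using $K_{X^{\dag}}\cdot C^{\dag}=0$, as $C^{\dag}$ is a flopping curve), the ambient contributions cancel and one is left with $\ch_3=\tfrac16(S^{\dag})^3$. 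Thus both non-trivial components are governed by the intersection numbers $(S^{\dag})^2$ and $(S^{\dag})^3$ in $X^{\dag}$.

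It then remains to show $(S^{\dag})^2=n[C^{\dag}]$ and $(S^{\dag})^3=-n$. Since $\phi$ is an isomorphism away from $C,C^{\dag}$ and $S^2=0$ in $X$, the class $(S^{\dag})^2$ differs from $\phi_{\ast}(S^2)=0$ only by a multiple of $[C^{\dag}]$, so $(S^{\dag})^2=b[C^{\dag}]$, equivalently $c_1(N_{S^{\dag}/X^{\dag}})=bC^{\dag}$ in $\Cl(S^{\dag})$. Pairing with $C^{\dag}$ and using $\deg(N_{S^{\dag}/X^{\dag}}|_{C^{\dag}})=S^{\dag}\cdot C^{\dag}=-(S\cdot C)=-1$ (the sign change of the intersection number with the flopped curve) together with $(C^{\dag})^2_{S^{\dag}}=(\mu^{\ast}C^{\dag})^2=\hat C^2+\tfrac{n-1}{n}=-\tfrac1n$, we obtain $b\cdot(-\tfrac1n)=-1$, i.e. $b=n$; hence $(S^{\dag})^3=n(S^{\dag}\cdot C^{\dag})=-n$. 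Substituting gives $\ch_2=(j-\tfrac n2)C^{\dag}$ and $\ch_3=-\tfrac n6$, which is the assertion. (Alternatively $(S^{\dag})^2$ and $(S^{\dag})^3$ can be read off directly from the affine charts of a crepant resolution of \eqref{affine:n}, or along the Pagoda \eqref{Pagoda}, where the factor $n$ comes from the width.)

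I expect the main obstacle to be precisely this intersection-theoretic input $(S^{\dag})^2=n[C^{\dag}]$: it is the one place where the width-$n$ geometry of Lemma~\ref{lem:flop} (the $A_{n-1}$-singularity, Reid's Pagoda) is genuinely used, and pinning down the coefficient $n$ — rather than establishing merely that $(S^{\dag})^2$ is proportional to $[C^{\dag}]$ — requires careful bookkeeping on explicit charts or on the iterated blow-up diagram. The auxiliary inputs $R^1\mu_{\ast}\oO_{\widetilde{S^{\dag}}}(D_j)=0$, $\oO_{S^{\dag}}(jC^{\dag})=\mu_{\ast}\oO_{\widetilde{S^{\dag}}}(D_j)$ and $\chi(\oO_{\widetilde{S^{\dag}}})=\chi(\oO_{S^{\dag}})$ are standard facts about rational double points, and once the intersection numbers are known the remaining Riemann--Roch computation is routine; it is perhaps worth noting that the potentially $j$-dependent part of $\ch_3$ cancels by itself because $D_j^2=D_j\cdot K_{\widetilde{S^{\dag}}}$, which explains why the answer does not depend on $j$.
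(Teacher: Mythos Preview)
Your argument is correct, and it takes a genuinely different route from the paper's. The paper never passes to the minimal resolution: instead it works directly on $X^{\dag}$ and uses Bridgeland's perverse $t$-structure $\iPPer_0(X^{\dag}/Y)$ to show that $\oO_{kC^{\dag}}\in\iT$ has all stable factors of the form $\oO_{C^{\dag}}(a)$ with $a\ge 0$, whence $\chi(\oO_{kC^{\dag}})\ge k$; equality for $k=n$ is forced by the identity $\chi(\oO_{nC^{\dag}})=-S^{\dag 3}=n$, and this pins down $\ch(i^{\dag}_{\ast}\oO_{S^{\dag}}(-kC^{\dag}))$ for all $1\le k\le n$. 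Your approach via GRR through $\psi=i^{\dag}\circ\mu$ replaces that derived-category input by explicit intersection theory on $\widetilde{S^{\dag}}$: the identity $D_j^2=D_j\cdot K_{\widetilde{S^{\dag}}}=-j$ does exactly the work that the perverse-sheaf argument does in the paper, and in fact makes the $j$-independence of $\ch_3$ more transparent. Both proofs ultimately rest on the same intersection-theoretic input $(S^{\dag})^2=nC^{\dag}$ and $S^{\dag}\cdot C^{\dag}=-1$; the paper asserts the first of these as ``a local computation'' showing $\oO_{S^{\dag}}(S^{\dag})\cong\oO_{S^{\dag}}(nC^{\dag})$, while you deduce it from Mumford's $\mathbb{Q}$-intersection form on $S^{\dag}$, which is a nice alternative. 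The trade-off is that your route needs the auxiliary vanishing $R^1\mu_{\ast}\oO_{\widetilde{S^{\dag}}}(D_j)=0$ and the identification $\mu_{\ast}\oO_{\widetilde{S^{\dag}}}(D_j)=\oO_{S^{\dag}}(jC^{\dag})$; these are indeed standard for rational double points (e.g.\ via the McKay correspondence or a direct toric check on $A_{n-1}$), but a one-line reference would strengthen the write-up.
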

\begin{proof}
We first recall Bridgeland's 
perverse coherent sheaves~\cite{Br1}
(also see~\cite[Subsection~2.2]{TodS}) defined by 
\begin{align*}
\iPPer_0(X^{\dag}/Y) \cneq
\left\{ E \in D^b \Coh(X^{\dag}) : 
\begin{array}{c}
\dR f_{\ast}^{\dag}E \in \Coh_0(Y) \\
\Hom^{<1}(E, \oO_{C^{\dag}}(-1))=0 \\
\Hom^{<-1}(\oO_{C^{\dag}}(-1), E)=0
\end{array}
\right\}.
\end{align*}
Here $\Coh_0(Y)$ is the category of zero 
dimensional sheaves on $Y$. 
We set $\iT$ and $\iF$ to be
\begin{align*}
 \iT \cneq 
\iPPer_0(X^{\dag}/Y) \cap \Coh(X^{\dag}), \ 
\iF \cneq 
\iPPer_0(X^{\dag}/Y)[-1] \cap \Coh(X^{\dag}). 
\end{align*}
Then by~\cite{MVB}, 
$(\iT, \iF)$ forms a torsion pair 
of the category
\begin{align*}
\Coh_0(X^{\dag}/Y) \cneq \{ F \in \Coh(X^{\dag}) : 
f_{\ast}^{\dag} F \in \Coh_0(Y)\}
\end{align*}
and $\iPPer_0(X^{\dag}/Y)$ is the associated tilting, i.e. 
$\langle \iF [1], \iT \rangle$.
 
For $k\ge 1$, let 
$kC^{\dag}\subset S^{\dag}$ be the subscheme
defined by the ideal $\oO_{S^{\dag}}(-kC^{\dag}) \subset \oO_{S^{\dag}}$. 
We prove that $\chi(\oO_{kC^{\dag}})=k$ holds for $1\le k\le n$. 
Since there is a surjection 
$\oO_{X^{\dag}} \twoheadrightarrow \oO_{kC^{\dag}}$, 
it follows that
\begin{align*}
\oO_{kC^{\dag}} \in \iT.
\end{align*}
 Note that, since $\oO_{f^{\dag -1}(p)}=\oO_{C^{\dag}}$, 
any one dimensional stable sheaf on $X^{\dag}$
supported on $C^{\dag}$ must be
of the form $\oO_{C^{\dag}}(a)$ for some $a\in\mathbb{Z}$. 
By~\cite[Lemma~2.4]{TodS}, 
the category $\iF$ is the extension closure of
objects of the form $\oO_{C^{\dag}}(a)$
with $a\le -1$. 
Since $(\iT, \iF)$ is a torsion pair of $\Coh_0(X^{\dag}/Y)$, 
this implies that the stable factors of 
$\oO_{kC^{\dag}}$ consist of $\oO_{C^{\dag}}(a_i)$
for $1\le i\le k$ with $a_i \ge 0$. 
In particular, we have the inequality 
\begin{align*}
\chi(\oO_{kC^{\dag}}) \ge k
\end{align*} and the equality 
holds if and only if $a_i=0$ for all $i$. 

On the other hand
since $S^2=0$,  
a local computation
easily shows that 
$\oO_{S^{\dag}}(S^{\dag}) \cong 
\oO_{S^{\dag}}(nC^{\dag})$. 
Hence we have 
\begin{align*}
\ch(\oO_{nC^{\dag}})=\ch(\oO_{S^{\dag}})- \ch(\oO_{S^{\dag}}(-S^{\dag}))
\end{align*}
which shows $\chi(\oO_{nC^{\dag}})=-S^{\dag 3}$.
Since $S^{\dag 3}=nC^{\dag} \cdot S^{\dag}=-n$, 
we obtain $\chi(\oO_{nC^{\dag}})=n$.
Hence the above argument 
shows that $\oO_{nC^{\dag}}$
is a $n$-step extensions of $\oO_{C^{\dag}}$. 
Let $\oO_{nC^{\dag}} \twoheadrightarrow T_k$ be a 
surjection such that $T_k$ is a $k$-step extensions of $\oO_{C^{\dag}}$. 
Then $T_k$ is a structure sheaf of a pure one dimensional subscheme 
$kC^{'\dag} \subset S^{\dag}$
with fundamental cycle $k[C^{\dag}]$.  
Since this is a characterizing property of $kC^{\dag}$, 
we have $kC^{\dag}=kC^{'\dag}$. 
Therefore 
we have $\oO_{kC^{\dag}} \cong T_k$, and
$\chi(\oO_{kC^{\dag}})=k$ holds. 

The above computation shows that
\begin{align*}
\ch(i_{\ast}^{\dag}\oO_{S^{\dag}}(-kC^{\dag}))=
\left(0, S^{\dag}, -\frac{S^{\dag 2}}{2} -kC^{\dag}, 
\frac{S^{\dag 3}}{6}-k  \right)
\end{align*}
for $1\le k\le n$. 
Setting $j=n-k$ and noting 
$S^{\dag 2}=nC^{\dag}$, $S^{\dag 3}=-n$, 
$i_{\ast}^{\dag}\oO_{S^{\dag}}(jC^{\dag})
=i_{\ast}^{\dag}\oO_{S^{\dag}}(-k C^{\dag}) \otimes \oO_{X^{\dag}}(S^{\dag})$, 
we obtain the result. 
\end{proof}
We write 
$l=kn+j$ for 
$k\in \mathbb{Z}$ and $0\le j\le n-1$.
Note that we have 
\begin{align*}
\ch(i_{\ast}^{\dag}\oO_{S^{\dag}}((kn+j)C^{\dag}))
= e^{kS^{\dag}}(\ch(i_{\ast}^{\dag}\oO_{S^{\dag}}(jC^{\dag}))). 
\end{align*}
Together with Lemma~\ref{lem:Mukai} and (\ref{tdX}), 
a little computation shows that 
the Mukai vector of $i_{\ast}^{\dag}$ of (\ref{Quo}) 
is computed as 
\begin{align*}
& \\
&\left(0, S^{\dag}, \left(kn + j -\frac{n}{2} \right)C^{\dag} +
 \frac{c_1(X^{\dag})}{4}S^{\dag}, \right. \\
\notag
&\hspace{20mm}\left. -\frac{n}{6} -kj + \frac{kn}{2}-\frac{k^2 n}{2}
+ \left( \frac{c_1(X^{\dag})^2}{96} + \frac{c_2(X^{\dag})}{24}
\right)S^{\dag}  -\lvert Q \rvert \right). 
\end{align*}

\subsection{Proof of the formula (\ref{Quotm})}\label{subsec:proof}
\begin{proof}
For a variety $X$ and $\lL \in \Coh(X)$, 
we denote by $\Quot^m(\lL)$ the
Quot scheme which parametrizes the zero dimensional quotients 
\begin{align}\label{LQ}
\lL \twoheadrightarrow Q, \quad \length Q=m.
\end{align}
Also for $p\in X$, we denote by 
\begin{align*}
\Quot_p^m(\lL) \subset \Quot^m(\lL)
\end{align*}
the subscheme 
consisting of quotients (\ref{LQ}) such that 
$Q$ is supported on $p$. 
By (\ref{flop:form}) and the arguments in the previous subsection, we obtain 
\begin{align}\notag
&\sum_{\begin{subarray}{c}
m\ge 0, 
k \in \mathbb{Z} \\
0 \le j\le n-1
\end{subarray}}
\chi(\Quot^m(\oO_{S^{\dag}}((kn+j)C^{\dag})))q^{m+\frac{n}{6} +kj - 
\frac{kn}{2}+\frac{k^2 n}{2}
-\frac{c_2(X^{\dag})}{24}S^{\dag}}t^{\left(\frac{n}{2}-j-kn\right)C^{\dag}} \\
\notag
&=\sum_{m\ge 0} \chi(\Hilb^m(S))q^{m-\frac{c_2(X)}{24}S}
\cdot 
q^{\frac{n}{12}}t^{\frac{n}{2}C^{\dag}}
\prod_{m \in \mathbb{Z}_{>0}}f_n(q^{m}t^{C^{\dag}})
\prod_{m \in \mathbb{Z}_{\ge 0}} f_n(q^{m} t^{-C^{\dag}}).
\end{align}
Here we have used that  
\begin{align*}
\phi_{\ast}\left( \frac{c_1(X)}{4}S, 
\frac{c_1(X)^2}{96}S \right)=\left( 
\frac{c_1(X^{\dag})}{4}S^{\dag}, 
\frac{c_1(X^{\dag})^2}{96}S^{\dag} \right)
\end{align*}
since $c_1(X)$ and $c_1(X^{\dag})$ are 
pull-backs from divisor classes on $Y$. 

We simplify both sides of the above equation. 
Since $S^{\dag 2}=nC^{\dag}$, we have the isomorphism
\begin{align}\notag
\otimes \oO_{X^{\dag}}(kS^{\dag}) \colon 
\Quot^m(\oO_{S^{\dag}}(jC^{\dag}))
&\stackrel{\cong}{\to}
\Quot^m(\oO_{S^{\dag}}((kn+j)C^{\dag})).
\end{align}
Hence the Euler characteristics of both sides coincide.
Also note that the Weil
divisor $D \subset A_{n-1}$ 
corresponds to $C^{\dag} \subset S^{\dag}$ 
under a local isomorphism between
$0 \in A_{n-1}$ and 
$o \in S^{\dag}$. 
Hence we have an isomorphism 
\begin{align}
\label{isom:AQ}
\Quot_{o}^m(\oO_{S^{\dag}}(jC^{\dag}))
&\cong \Quot_{0}^m(\oO_{A_{n-1}}(jD)). 
\end{align}
We also have the stratification
\begin{align}\label{strata}
&\Quot^m(\oO_{S^{\dag}}(jC^{\dag})) \\
&\notag=\coprod_{m_1+m_2=m}
\Quot^{m_1}_{o}(\oO_{S^{\dag}}(jC^{\dag})) \times 
\Hilb^{m_2}(S^{\dag} \setminus \{o\}).
\end{align}
Combined these, we have the following equalities:
\begin{align*}
&\sum_{m\ge 0} \chi(\Quot^m(\oO_{S^{\dag}}(jC^{\dag})))q^m \\
&= \sum_{m\ge 0} \chi(\Quot^m_{o}(\oO_{S^{\dag}}(jC^{\dag})))q^m
\cdot \sum_{m\ge 0} \chi(\Hilb^m(S^{\dag} \setminus \{o\})) q^m \\
&=\sum_{m\ge 0} \chi(\Quot_{0}^m(\oO_{A_{n-1}}(jD)))q^m
\cdot \sum_{m\ge 0} \chi(\Hilb^m(S)) q^m \\
&=\sum_{m\ge 0} \chi(\Quot^m(\oO_{A_{n-1}}(jD)))q^m
\cdot \sum_{m\ge 0} \chi(\Hilb^m(S)) q^m. 
\end{align*}
Here the first equality follows from (\ref{strata}), 
the second equality follows from 
G$\ddot{\rm{o}}$ttsche formula (\ref{intro:1}), 
$\chi(S^{\dag} \setminus \{o\})=\chi(S)$
and (\ref{isom:AQ}), 
and the last equality follows from the torus localization on $A_{n-1}$. 
Also an easy computation (cf.~\cite[Lemma~2.8, Proposition~2.9]{TodS}) 
shows that 
 \begin{align*}
c_2(X) \cdot S=c_2(X^{\dag}) \cdot S^{\dag}-2n.
\end{align*}
Summing up, we arrive at the formula: 
\begin{align}\notag
&\sum_{\begin{subarray}{c}
0\le j\le n-1 \\
m\ge 0, k\in \mathbb{Z}
\end{subarray}}
\chi(\Quot^{m}(\oO_{A_{n-1}}(jD)))
q^{\frac{k^2n}{2}+\left(j-\frac{n}{2} \right)k+m}
t^{-\left(kn+j\right)C^{\dag}} \\
\notag
&\hspace{40mm}=\prod_{m\in \mathbb{Z}_{>0}}
f_n(q^{m}t^{C^{\dag}}) \prod_{m\in \mathbb{Z}_{\ge 0}}
f_n(q^{m}t^{-C^{\dag}}). 
\end{align}
By replacing $k$ by $-k$, we obtain the desired formula (\ref{Quotm}). 
\end{proof}
The following is an obvious corollary of the formula (\ref{Quotm}): 
\begin{cor}
We have the following formula: 
\begin{align}\label{intro:form}
\sum_{m\ge 0} \chi(\Hilb^m(A_{n}))q^m 
= \mathrm{Coeff}_{t^0}
\left(\prod_{m>0} f_{n+1}(q^m t) \prod_{m\ge 0} f_{n+1}(q^m t^{-1})\right). 
\end{align}
Here $\mathrm{Coeff}_{t^0}(\ast)$ means 
that taking the $t^0$ coefficient
of the formal series $\ast$ with variables
$q, t$, and $f_n(x)$ is given by (\ref{defi:fn}). 
\end{cor}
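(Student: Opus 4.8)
The plan is to obtain (\ref{intro:form}) from (\ref{Quotm}) by nothing more than a coefficient extraction. First I would apply the formula (\ref{Quotm}) with $n$ replaced by $n+1$, which reads
\begin{align*}
\sum_{\begin{subarray}{c}
0\le j\le n \\
m\ge 0, k\in \mathbb{Z}
\end{subarray}}
\chi(\Quot^{m}(\oO_{A_{n}}(jD)))\,
q^{\frac{k^2(n+1)}{2}+\left(\frac{n+1}{2}-j\right)k+m}\,
t^{k(n+1)-j}
=\prod_{m\in \mathbb{Z}_{>0}}
f_{n+1}(q^{m}t)\prod_{m\in \mathbb{Z}_{\ge 0}}
f_{n+1}(q^{m}t^{-1}).
\end{align*}
Both sides make sense as formal power series in $q$ whose coefficients are Laurent polynomials in $t$: on the right, every factor $f_{n+1}(q^{m}t^{\pm 1})$ with $m\ge 1$ contributes only monomials divisible by $q$, while the lone factor $f_{n+1}(t^{-1})$ contributes only the finitely many powers $t^{0},\dots,t^{-(n+1)}$, so for each fixed power of $q$ only finitely many choices of exponents occur; on the left, the exponent $k^{2}(n+1)/2$ of $q$ grows without bound as $|k|\to\infty$, so again each power of $q$ involves only finitely many summands.

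Next I would take the coefficient of $t^{0}$ on both sides. On the left-hand side the power of $t$ is $k(n+1)-j$ with $0\le j\le n$ and $k\in\mathbb{Z}$; since $k(n+1)$ is a multiple of $n+1$ and $0\le j\le n$, the equality $k(n+1)=j$ forces $k=0$ and $j=0$. Hence the only surviving terms are those with $j=k=0$, whose $q$-exponent reduces to $m$, and since $\Quot^{m}(\oO_{A_{n}}(0\cdot D))=\Hilb^{m}(A_{n})$ (the case $j=0$ recorded just after (\ref{Quotm0})), the $t^{0}$-part of the left-hand side is precisely $\sum_{m\ge 0}\chi(\Hilb^{m}(A_{n}))q^{m}$. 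The $t^{0}$-part of the right-hand side is by definition $\mathrm{Coeff}_{t^{0}}\big(\prod_{m>0}f_{n+1}(q^{m}t)\prod_{m\ge 0}f_{n+1}(q^{m}t^{-1})\big)$. Comparing the two gives (\ref{intro:form}).

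There is essentially no obstacle: all the work is already contained in the formula (\ref{Quotm}), and the only points meriting a moment's attention are the elementary divisibility remark which shows that the $t^{0}$-coefficient isolates exactly the summand indexed by $(j,k)=(0,0)$, and the routine check that the $t^{0}$-coefficient of the infinite product is a well-defined element of $\mathbb{Z}[[q]]$.
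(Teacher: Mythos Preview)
Your argument is correct and is precisely the extraction the paper has in mind: the paper states this result as ``an obvious corollary of the formula (\ref{Quotm})'' without further proof, and your divisibility remark isolating $(j,k)=(0,0)$ together with the identification $\Quot^{m}(\oO_{A_n}(0\cdot D))=\Hilb^{m}(A_n)$ is exactly the obvious step being alluded to.
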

\section{Proof of the main result}\label{sec:proof}
\subsection{Proof of Theorem~\ref{thm:intro}}
\begin{proof}
In what follows, we set
\begin{align*}
\xi_m \cneq e^{\frac{2\pi \sqrt{-1}}{m}} \in \mathbb{C}. 
\end{align*}
We have the decomposition
\begin{align*}
f_n(x)=\prod_{i=1}^{n}(1-x\xi_{n+1}^{i}). 
\end{align*}
Therefore the RHS of (\ref{intro:form}) 
coincides with the $t^0$-coefficient of 
\begin{align}\label{pf1}
\prod_{i=1}^{n+1} \left(\prod_{m>0} (1-q^m t \xi_{n+2}^i) \prod_{m\ge 0}
(1-q^m t^{-1} \xi_{n+2}^{-i})\right). 
\end{align}
Using the Jacobi triple product formula
\begin{align*}
\sum_{k\in \mathbb{Z}}
q^{\frac{k^2}{2}+ \frac{k}{2}}(-t)^k=
\prod_{m\ge 1}(1-q^m) \prod_{m>0} (1-q^m t) \prod_{m\ge 0}(1-q^m t^{-1})
\end{align*}
the product (\ref{pf1}) is written as
\begin{align*}
\prod_{m\ge 1}(1-q^m)^{-n-1}\prod_{i=1}^{n+1}
\left(\sum_{k\in \mathbb{Z}}q^{\frac{k^2}{2}+ \frac{k}{2}}
(-t\xi^i_{n+2})^k  \right). 
\end{align*}
The $t^0$-coefficient of the above product becomes
\begin{align*}
\prod_{m\ge 1}(1-q^m)^{-n-1}
\cdot \left(\sum_{\begin{subarray}{c}
(k_1, \cdots, k_{n+1}) \in \mathbb{Z}^{n+1}\\
k_1 + \cdots + k_{n+1}=0
\end{subarray}}
q^{\frac{k_1^2}{2}+ \cdots + \frac{k_{n+1}^2}{2}}
\xi_{n+2}^{k_1 + 2k_2 + \cdots + (n+1)k_{n+1}}  \right). 
\end{align*}
The right sum coincides with $\Theta_n(q)$
defined by (\ref{Theta}). Therefore we obtain 
\begin{align}\label{form:A}
\sum_{m\ge 0} \chi(\Hilb^m(A_n))q^n =\prod_{m\ge 1}(1-q^m)^{-n-1}
\cdot \Theta_n(q). 
\end{align}
For a variety $X$ and $p\in X$, we denote by 
$\Hilb_p^m(X)$ the subscheme of $\Hilb^m(X)$
corresponding to the zero dimensional subschemes $Z \subset X$
with $\Supp(Z)=\{p\}$. 
Let $S$ be an algebraic surface as in Theorem~\ref{thm:intro}.
We have the stratification
\begin{align*}
\Hilb^m(S)=\coprod_{m_0+ m_1 + \cdots + m_l=m}
\Hilb^{m_0}(S^{o}) \times \prod_{i=1}^{l}
\Hilb^{m_i}_{p_i}(S). 
\end{align*}
Here $S^{o} \subset S$ is the smooth part of $S$. 
Noting that $p_i$ is an $A_{n_i}$-type singularity, the 
torus localization on $A_{n_i}$ 
shows that
\begin{align*}
\chi(\Hilb_{p_i}^{m}(S))=\chi(\Hilb_{0}^m(A_{n_i}))
=\chi(\Hilb^m(A_{n_i})). 
\end{align*}
Combined with (\ref{intro:1}) and (\ref{form:A}), 
we obtain the formula: 
\begin{align*}
\sum_{m\ge 0} \chi(\Hilb^m(S))q^m=
\prod_{m\ge 1}(1-q^m)^{-\chi(S^{o})-\sum_{i=1}^{l}(n_i+1)}
\cdot \prod_{i=1}^{l} \Theta_{n_i}(q). 
\end{align*}
For the minimal resolution $\widetilde{S} \to S$, we have
\begin{align*}
\chi(\widetilde{S})=\chi(S^{o}) + \sum_{i=1}^{l}(n_i +1). 
\end{align*}
Combined with the definition of $\eta(q)$ in (\ref{eta}), 
we obtain the desired formula (\ref{intro:2}). 
\end{proof}

\subsection{Modularity of $\Theta_n(q)$}
In order to conclude Corollary~\ref{cor:modular}, we need to 
check the modularity of $\Theta_n(q)$. 
Indeed, we show that 
$\Theta_n(q)$ is a $\mathbb{Q}$-linear combination of
the theta series determined by integer valued positive definite 
quadratic forms on $\mathbb{Z}^n$. 
For a positive definite quadratic form 
\begin{align*}
Q \colon \mathbb{Z}^n \to \mathbb{Z}
\end{align*}
let $\Theta_{Q}(q)$ be the associated theta series
\begin{align*}
\Theta_{Q}(q) \cneq \sum_{(k_1, \cdots, k_n) \in \mathbb{Z}^n}
q^{Q(k_1, \cdots, k_n)}. 
\end{align*}
It is well-known that $\Theta_Q(q)$ is a modular form of weight $n/2$
for some congruence subgroup in $\mathrm{SL}_2(\mathbb{Z})$
(cf.~\cite[Section~3.2]{Z123}). 
\begin{prop}\label{prop:modular}
The series
$\Theta_{n}(q)$ is a $\mathbb{Q}$-linear combination of 
the theta series determined by integer valued positive definite 
quadratic forms on $\mathbb{Z}^n$, i.e. 
there exist $N\ge 1$, $a_i \in \mathbb{Q}$
and integer valued positive definite 
quadratic forms $Q_i$ on $\mathbb{Z}^n$
for $1\le i\le N$ such that $\Theta_{n}(q)$ is written as
\begin{align*}
\Theta_n(q)=\sum_{i=1}^{N} a_i \Theta_{Q_i}(q). 
\end{align*}
\end{prop}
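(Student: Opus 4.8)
The plan is to begin from the explicit identity
\begin{align*}
\Theta_n(q)=\sum_{k\in\mathbb{Z}^n}q^{Q_0(k)}\,\zeta^{\ell(k)},\qquad
Q_0(k)\cneq\sum_{1\le i\le j\le n}k_ik_j,\quad \ell(k)\cneq\sum_{i=1}^{n}ik_i,\quad \zeta\cneq e^{2\pi\sqrt{-1}/(n+2)},
\end{align*}
which is just the definition (\ref{Theta}). Note $Q_0$ is integer valued and positive definite, since $Q_0(k)=\tfrac12\sum_i k_i^2+\tfrac12\bigl(\sum_i k_i\bigr)^2$. For $r\in\mathbb{Z}/(n+2)$ I set $\Lambda_r\cneq\{k\in\mathbb{Z}^n:\ell(k)\equiv r\pmod{n+2}\}$ and $\Theta^{(r)}(q)\cneq\sum_{k\in\Lambda_r}q^{Q_0(k)}$, so that $\Theta_n(q)=\sum_{r=0}^{n+1}\zeta^{r}\,\Theta^{(r)}(q)$ and each $\Theta^{(r)}$ lies in $\mathbb{Z}_{\ge0}[[q]]$. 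Because the coefficient of $k_1$ in $\ell$ is $1$, the map $\ell\colon\mathbb{Z}^n\to\mathbb{Z}$ is surjective; hence for every divisor $d\mid(n+2)$ the subgroup $L_d\cneq\ell^{-1}(d\mathbb{Z})=\coprod_{d\mid r}\Lambda_r$ is a sublattice of $\mathbb{Z}^n$ of index $d$. Fixing a $\mathbb{Z}$-basis of $L_d$ and restricting $Q_0$ to it produces an integer valued positive definite quadratic form $Q_{(d)}$ on $\mathbb{Z}^n$ with $\Theta_{Q_{(d)}}(q)=\sum_{k\in L_d}q^{Q_0(k)}=\sum_{d\mid r}\Theta^{(r)}(q)$; in particular $Q_{(1)}=Q_0$. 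These genuine sublattice theta series $\Theta_{Q_{(d)}}$, $d\mid(n+2)$, will be the building blocks.

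The first key step is to get rid of the irrational coefficients $\zeta^{r}$. By the formula (\ref{form:A}) together with the integrality of the Euler characteristics $\chi(\Hilb^m(A_n))$, the series $\Theta_n(q)$ has integer Fourier coefficients, hence is fixed by every element of $\Gal(\mathbb{Q}(\zeta)/\mathbb{Q})\cong(\mathbb{Z}/(n+2))^{\times}$. Since the $\Theta^{(r)}$ already have rational coefficients, applying the automorphism $\zeta\mapsto\zeta^{a}$ to the identity $\Theta_n=\sum_r\zeta^r\Theta^{(r)}$ (which fixes the left-hand side) and averaging over $a\in(\mathbb{Z}/(n+2))^{\times}$ gives
\begin{align*}
\Theta_n(q)=\frac{1}{\varphi(n+2)}\sum_{a\in(\mathbb{Z}/(n+2))^{\times}}\sum_{r=0}^{n+1}\zeta^{ar}\,\Theta^{(r)}(q)=\sum_{r=0}^{n+1}\frac{c_{n+2}(r)}{\varphi(n+2)}\,\Theta^{(r)}(q),
\end{align*}
where $c_{n+2}(r)=\sum_{a\in(\mathbb{Z}/(n+2))^{\times}}\zeta^{ar}$ is Ramanujan's sum. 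By the classical evaluation $c_N(r)=\mu\bigl(N/\gcd(N,r)\bigr)\,\varphi(N)/\varphi\bigl(N/\gcd(N,r)\bigr)$ this is an \emph{integer} depending only on $g\cneq\gcd(r,n+2)$, a divisor of $n+2$.

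The second key step is a M\"obius inversion over the poset of divisors of $n+2$. Grouping the indices $r$ by $g=\gcd(r,n+2)$ and writing $S_g\cneq\sum_{\gcd(r,n+2)=g}\Theta^{(r)}$, the displayed identity reads $\Theta_n=\sum_{g\mid(n+2)}\varphi(n+2)^{-1}c_{n+2}(g)\,S_g$, while the defining relation of $Q_{(d)}$ gives $\Theta_{Q_{(d)}}=\sum_{d\mid r}\Theta^{(r)}=\sum_{g\,:\,d\mid g\mid(n+2)}S_g$ for each $d\mid(n+2)$. M\"obius inversion in this poset yields $S_g=\sum_{d\,:\,g\mid d\mid(n+2)}\mu(d/g)\,\Theta_{Q_{(d)}}$, and substituting back and exchanging the order of summation gives the desired expression
\begin{align*}
\Theta_n(q)=\sum_{d\mid(n+2)}a_d\,\Theta_{Q_{(d)}}(q),\qquad a_d\cneq\frac{1}{\varphi(n+2)}\sum_{g\mid d}\mu(d/g)\,c_{n+2}(g)\in\mathbb{Q},
\end{align*}
so one may take $N$ to be the number of divisors of $n+2$ and $\{Q_i\}=\{Q_{(d)}:d\mid(n+2)\}$.

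The point I expect to be the genuine obstacle is conceptual rather than computational: the naive partition of $\mathbb{Z}^n$ only expresses $\Theta_n$ as a combination of the \emph{shifted} theta series $\Theta^{(r)}$, with transcendental coefficients, and the content of the proposition is that these reassemble into honest (unshifted) sublattice theta series with rational coefficients. This works precisely because the coefficients collapse to functions of $\gcd(r,n+2)$ --- which is exactly what the integrality of $\Theta_n$ (via (\ref{form:A})), Galois averaging, and the Ramanujan sum identity deliver. One could alternatively bypass the appeal to (\ref{form:A}) by exhibiting isometries of $(\mathbb{Z}^n,Q_0)$ realizing multiplication by units on $\mathbb{Z}^n/L_{n+2}$, proving that $\Theta^{(r)}$ itself depends only on $\gcd(r,n+2)$; but the argument above is shorter. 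A check against $n\le4$ should recover the entries of Table~1 (e.g. for $n=1$: $a_1=-\tfrac12$, $a_3=\tfrac32$, $L_1=\mathbb{Z}$, $L_3=3\mathbb{Z}$).
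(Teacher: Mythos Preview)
Your proof is correct and follows essentially the same strategy as the paper: Galois-average the defining sum, observe that the resulting coefficients depend only on $\gcd(r,n+2)$, and then express the gcd-classes as $\mathbb{Z}$-linear combinations of sublattice theta series. The only differences are cosmetic---you invoke (\ref{form:A}) for Galois invariance where the paper uses the product (\ref{pf1}) directly, and you use M\"obius inversion and Ramanujan sums by name where the paper argues by induction (Lemma~\ref{lem:cont}) and writes out the Galois trace explicitly.
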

The result of Corollary~\ref{cor:modular}
follows from Theorem~\ref{thm:intro} together with the above
proposition. 
In order to prove Proposition~\ref{prop:modular}, we 
note the following: 
\begin{itemize}
\item By the base change of $\mathbb{Z}^n$ given by 
$k_1 \mapsto k_1 + 2k_2 + \cdots + nk_n$, $k_i \mapsto k_i$
for $i\ge 2$, the series $\Theta_n(q)$ is written as 
\begin{align*}
\Theta_n(q)=\sum_{(k_1, \cdots, k_n) \in \mathbb{Z}^n}
q^{Q(k_1, \cdots, k_n)}\xi^{k_1}_{n+2}
\end{align*}
for some integer valued positive definite 
quadratic form $Q$ on $\mathbb{Z}^n$. 
\item The series $\Theta_n(q)$ is invariant after
replacing $\xi_{n+2}$ 
by $g(\xi_{n+2})$ for any element 
$g \in \mathrm{Gal}(\mathbb{Q}(\xi_{n+2})/\mathbb{Q})$. 
This follows since the product expansion (\ref{pf1}) also holds
after replacing $\xi_{n+2}$ by $g(\xi_{n+2})$. 
\end{itemize}
Therefore the result of Proposition~\ref{prop:modular}
follows from the following proposition: 
\begin{prop}\label{prop:rest}
Let $n, m$ be the positive integers, and $Q$ an integer
valued positive definite 
quadratic form on $\mathbb{Z}^n$. 
Suppose that the series
\begin{align*}
\Theta_{Q, m}(q) =\sum_{(k_1, \cdots, k_n) \in \mathbb{Z}^n}
q^{Q(k_1, \cdots, k_n)} \xi_m^{k_1}
\end{align*}
is invariant after replacing $\xi_m$ by 
$g(\xi_m)$ for any $g\in \mathrm{Gal}(\mathbb{Q}(\xi_m)/\mathbb{Q})$. 
Then
$\Theta_{Q, m}(q)$ is a $\mathbb{Q}$-linear combination of the
theta series determined by integer valued positive definite 
quadratic forms on $\mathbb{Z}^n$. 
\end{prop}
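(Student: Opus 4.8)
The plan is to use the hypothesis of Galois-invariance to replace the root-of-unity weight $\xi_m^{k_1}$ in $\Theta_{Q,m}(q)$ by a \emph{rational} weight, and then to organize the resulting sum according to the divisibility of $k_1$. First I would recall that $\mathrm{Gal}(\mathbb{Q}(\xi_m)/\mathbb{Q}) \cong (\mathbb{Z}/m\mathbb{Z})^{\times}$, where $a \in (\mathbb{Z}/m\mathbb{Z})^{\times}$ acts by $\xi_m \mapsto \xi_m^{a}$, so that applying $a$ to $\Theta_{Q,m}(q)$ replaces the weight $\xi_m^{k_1}$ by $\xi_m^{a k_1}$. Since by assumption $\Theta_{Q,m}(q)$ is fixed by the whole Galois group, it equals its own Galois average, and interchanging the two sums (legitimate coefficientwise, as $\{k : Q(k)\le N\}$ is finite for each $N$) gives
\begin{align*}
\Theta_{Q,m}(q) &= \frac{1}{\varphi(m)}\sum_{a \in (\mathbb{Z}/m\mathbb{Z})^{\times}}\ \sum_{(k_1,\dots,k_n)\in\mathbb{Z}^n} q^{Q(k_1,\dots,k_n)}\,\xi_m^{a k_1} \\
&= \sum_{(k_1,\dots,k_n)\in\mathbb{Z}^n} c(k_1)\, q^{Q(k_1,\dots,k_n)},
\end{align*}
where $c(t) \cneq \varphi(m)^{-1}\sum_{a \in (\mathbb{Z}/m\mathbb{Z})^{\times}} \xi_m^{a t}$ is $\varphi(m)^{-1}$ times the Ramanujan sum. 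In particular $c(t)$ is rational and depends on $t$ only through $\gcd(t,m)$.

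Next I would linearize $c$. By the classical closed form of the Ramanujan sum, $\varphi(m)\,c(t) = \sum_{e \mid \gcd(t,m)} e\,\mu(m/e) = \sum_{e \mid m} e\,\mu(m/e)\,[\,e \mid t\,]$, so that $c(t) = \sum_{e \mid m} a_e\,[\,e \mid t\,]$ with $a_e \cneq e\,\mu(m/e)/\varphi(m) \in \mathbb{Q}$. Substituting into the formula above and swapping the finite sum over $e$ with the lattice sum yields
\begin{align*}
\Theta_{Q,m}(q) = \sum_{e \mid m} a_e \sum_{\substack{(k_1,\dots,k_n)\in\mathbb{Z}^n \\ e \mid k_1}} q^{Q(k_1,\dots,k_n)} = \sum_{e \mid m} a_e\, \Theta_{Q_e}(q),
\end{align*}
where $Q_e(w_1,\dots,w_n) \cneq Q(e w_1, w_2, \dots, w_n)$. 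Finally I would check that each $Q_e$ is an integer valued positive definite quadratic form on $\mathbb{Z}^n$: it is the pullback of $Q$ along the linear map $(w_1,\dots,w_n)\mapsto (e w_1, w_2,\dots,w_n)$, hence integer valued because $Q$ is, and positive definite because that map is invertible over $\mathbb{R}$ and $Q$ is positive definite. This exhibits $\Theta_{Q,m}(q)$ as the required $\mathbb{Q}$-linear combination.

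The whole content is in the first step: Galois-averaging turns the algebraic weights $\xi_m^{k_1}$ into the Ramanujan sum, which happens to be simultaneously rational-valued and a $\mathbb{Z}$-linear combination of the divisibility indicators $[\,e \mid k_1\,]$; each such indicator cuts out the sublattice $e\mathbb{Z}\oplus\mathbb{Z}^{n-1}\subset\mathbb{Z}^n$, on which $Q$ restricts to an honest integer valued positive definite form. Everything after that is bookkeeping, so I do not expect a serious obstacle beyond getting these combinatorial identities straight. It is worth stressing that the Galois-invariance hypothesis is used exactly once, to pass from $\Theta_{Q,m}(q)$ to its average, and that it is genuinely necessary: without it the $q$-expansion of $\Theta_{Q,m}(q)$ can have irrational coefficients, so no such $\mathbb{Q}$-linear decomposition can exist.
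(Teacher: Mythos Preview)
Your proof is correct and follows the same overall strategy as the paper: Galois-average to obtain rational weights depending only on $\gcd(k_1,m)$, then decompose into theta series over the sublattices $e\mathbb{Z}\oplus\mathbb{Z}^{n-1}$ for $e\mid m$. The paper proceeds more indirectly: after averaging it stratifies $\mathbb{Z}^n$ by the sets $S_i=\{\gcd(k_1,m)=m_i\}$, observes that the Galois-averaged weight is a constant integer $A_i$ on each $S_i$, and then uses an inclusion--exclusion argument (packaged via a ``K-group of subsets'' and two auxiliary lemmas) to rewrite each $S_i$-sum as a $\mathbb{Z}$-linear combination of sums over the sets $T_j=\{m_j\mid k_1\}$, which are manifestly theta series of the forms $Q(m_j k_1,k_2,\dots,k_n)$. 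Your invocation of the closed M\"obius formula for the Ramanujan sum, $c_m(t)=\sum_{e\mid m}e\,\mu(m/e)\,[e\mid t]$, performs both of these steps at once and moreover gives the coefficients explicitly as $a_e=e\,\mu(m/e)/\varphi(m)$, which the paper's inductive argument does not produce. The trade-off is that the paper's route is entirely self-contained, whereas yours quotes the Ramanujan-sum identity; since that identity is itself a one-line M\"obius inversion of $\sum_{e\mid m}c_e(t)=m\,[m\mid t]$, your argument is the more economical of the two.
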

The rest of this section is devoted to proving Proposition~\ref{prop:rest}.
\subsection{K-group of subsets of $\mathbb{Z}^n$} 
In what follows, we fix the notation in Proposition~\ref{prop:rest}.
We define the K-group of the subsets in $\mathbb{Z}^n$
to be
\begin{align*}
K^n \cneq \bigoplus_{ T \subset \mathbb{Z}^n} \mathbb{Z}[T]/\sim.
\end{align*}
Here the relation $\sim$ is generated by
\begin{align}\label{relation}
[T_1]+[T_2] \sim [T_1 \cup T_2] -[T_1 \cap T_2].  
\end{align}
For any element
\begin{align*}
\alpha=\sum_{i} a_i[T_i] \in K^n, \quad a_i \in \mathbb{Z}
\end{align*}
the series
\begin{align}\label{Qa}
\Theta_{Q, \alpha}(q) =\sum_{i}a_i \sum_{(k_1, \cdots, k_n) \in T_i}
q^{Q(k_1, \cdots, k_n)}
\end{align}
is well-defined as it respects the relation (\ref{relation}). 
Let 
\begin{align*}
1=m_1<m_2< \cdots <m_l=m
\end{align*}
be the set of divisors of $m$. 
We define the following subsets: 
\begin{align*}
&S_i \cneq \{ (k_1, \cdots, k_n) \in \mathbb{Z}^n : 
\mathrm{g.c.d.}(k_1, m)=m_i\} \\
&T_i \cneq \{ (k_1, \cdots, k_n) \in \mathbb{Z}^n : 
m_i | k_1\}. 
\end{align*}
\begin{lem}\label{lem:cont}
The element $[S_i] \in K^n$ is 
contained in the subgroup of $K^n$ generated by 
$[T_1], \cdots, [T_l]$. 
\end{lem}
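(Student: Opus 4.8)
The goal is to express $[S_i] \in K^n$ as a $\mathbb{Z}$-linear combination of $[T_1], \dots, [T_l]$. The plan is to exploit the obvious set-theoretic relationship between the $S_i$ and $T_i$: an element $(k_1, \dots, k_n)$ lies in $T_i$ precisely when $m_i \mid k_1$, i.e. when $\mathrm{g.c.d.}(k_1, m)$ is a multiple of $m_i$. Hence, as sets,
\begin{align*}
T_i = \coprod_{m_i \mid m_j} S_j,
\end{align*}
the disjoint union being over those divisors $m_j$ of $m$ divisible by $m_i$. Since the $S_j$ are pairwise disjoint, the defining relation (\ref{relation}) gives $[T_i] = \sum_{m_i \mid m_j} [S_j]$ in $K^n$.

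**The inversion step.** This expresses each $[T_i]$ as a sum of the $[S_j]$, and I want to invert this to get the $[S_i]$ in terms of the $[T_j]$. The matrix relating the two families is the divisibility incidence matrix of the divisor poset of $m$, which is (block) upper-triangular with $1$'s on the diagonal once the divisors are listed in a linear extension of the divisibility order (e.g. in increasing order $m_1 < \dots < m_l$, noting $m_i \mid m_j \Rightarrow m_i \le m_j$, so the diagonal entry $[S_i]$ appears in $[T_i]$ and all other $S_j$ occurring in $[T_i]$ have $j > i$ or are incomparable). Therefore this matrix is invertible over $\mathbb{Z}$, and its inverse is given by the Möbius function of the divisor lattice: explicitly,
\begin{align*}
[S_i] = \sum_{m_i \mid m_j} \mu(m_j/m_i) [T_j],
\end{align*}
where $\mu$ is the classical Möbius function. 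I would verify this directly from $[T_i] = \sum_{m_i \mid m_j}[S_j]$ by the standard Möbius inversion computation over the poset of divisors of $m$, or equivalently just invoke that the zeta matrix of any finite poset is invertible over $\mathbb{Z}$ with integer inverse. In either case $[S_i]$ lies in the subgroup generated by $[T_1], \dots, [T_l]$, which is the claim.

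**Main obstacle.** There is no serious obstacle; the only point requiring a little care is the bookkeeping in $K^n$: one must check that the passage from the set-level disjoint union $T_i = \coprod_j S_j$ to the identity $[T_i] = \sum_j [S_j]$ is legitimate, i.e. that finitely many applications of (\ref{relation}) with pairwise-disjoint pieces (so all intersection terms $[T_a \cap T_b]$ vanish, being $[\varnothing] = 0$) indeed yield the stated sum. After that, the Möbius inversion is purely formal. I would also remark for later use that $\Theta_{Q, \alpha}$ is additive in $\alpha$, so this lemma will let us rewrite the relevant theta series in terms of the $\Theta_{Q, T_i}$, which have a manageable shape.
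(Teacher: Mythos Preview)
Your argument is correct and follows essentially the same route as the paper: both start from the disjoint decomposition $T_i=\coprod_{m_i\mid m_j} S_j$, pass to $[T_i]=\sum_{m_i\mid m_j}[S_j]$ in $K^n$, and then invert this triangular system. The only difference is cosmetic: the paper inverts by downward induction on $i$ (using $[S_i]=[T_i]-\sum_{j>i,\,m_i\mid m_j}[S_j]$), while you invoke M\"obius inversion to write the closed form $[S_i]=\sum_{m_i\mid m_j}\mu(m_j/m_i)[T_j]$; these are the same computation.
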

\begin{proof}
We prove the claim by the induction on $i$. 
For $i=l$, we have $S_l=T_l$, and the 
statement is obvious. 
Suppose that the claim holds for 
$[S_j]$ with $j>i$. 
We have $S_i \subset T_i$
and the complement is 
the disjoint union of $S_j$
with $j>i$, $m_i|m_j$. 
Therefore we obtain
\begin{align*}
[S_i]=[T_i]-\sum_{j>i, m_i|m_j}[S_j]. 
\end{align*}
By the induction, the claim also holds for $[S_i]$. 
\end{proof}
\begin{lem}\label{lem:both}
Both of $\Theta_{Q, T_i}(q)$, $\Theta_{Q, S_i}(q)$ 
are $\mathbb{Z}$-linear combinations of the theta 
series determined by integer valued positive definite
quadratic forms on 
$\mathbb{Z}^n$. 
\end{lem}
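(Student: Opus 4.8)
The plan is to treat $\Theta_{Q, T_i}(q)$ first, and then deduce the statement for $\Theta_{Q, S_i}(q)$ from it by means of Lemma~\ref{lem:cont}.

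For the sets $T_i$: by definition, $\Theta_{Q, T_i}(q)$ is the sum of $q^{Q(k_1, \dots, k_n)}$ over those $(k_1, \dots, k_n) \in \mathbb{Z}^n$ with $m_i \mid k_1$. I would make the substitution $k_1 = m_i k_1'$, which gives a bijection of this index set with all of $\mathbb{Z}^n$. Under it, the series becomes $\sum_{(k_1', k_2, \dots, k_n) \in \mathbb{Z}^n} q^{Q_i(k_1', k_2, \dots, k_n)}$, where $Q_i(k_1', k_2, \dots, k_n) \cneq Q(m_i k_1', k_2, \dots, k_n)$. The form $Q_i$ is again integer valued (since $Q$ is, and $m_i k_1' \in \mathbb{Z}$), and again positive definite, being the pull-back of the positive definite form $Q$ along the injective $\mathbb{R}$-linear map $(k_1', k_2, \dots, k_n) \mapsto (m_i k_1', k_2, \dots, k_n)$. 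Hence $\Theta_{Q, T_i}(q) = \Theta_{Q_i}(q)$ is itself one of the allowed theta series.

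For the sets $S_i$: by Lemma~\ref{lem:cont}, $[S_i] \in K^n$ lies in the subgroup generated by $[T_1], \dots, [T_l]$, so there are integers $c_1, \dots, c_l$ with $[S_i] = \sum_{j=1}^l c_j [T_j]$ in $K^n$. Since the assignment $\alpha \mapsto \Theta_{Q, \alpha}(q)$ is well-defined on $K^n$, in that it respects the defining relation (\ref{relation}), this yields $\Theta_{Q, S_i}(q) = \sum_{j=1}^l c_j \Theta_{Q, T_j}(q)$, which by the previous paragraph is a $\mathbb{Z}$-linear combination of theta series attached to integer valued positive definite quadratic forms on $\mathbb{Z}^n$, as required.

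I do not anticipate a genuine obstacle in this lemma: the only point that needs a word of justification is the positive-definiteness of $Q_i$, and that is immediate from the injectivity of the diagonal rescaling. The lemma is essentially bookkeeping, packaging the subsets $S_i$ into the shape needed for the Möbius-type summation that will complete the proof of Proposition~\ref{prop:rest}.
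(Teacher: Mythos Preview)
Your proof is correct and follows essentially the same approach as the paper: first identify $\Theta_{Q, T_i}(q)$ with the theta series of $Q_i(k_1, \dots, k_n) = Q(m_i k_1, k_2, \dots, k_n)$, then deduce the claim for $\Theta_{Q, S_i}(q)$ from Lemma~\ref{lem:cont} and the well-definedness of $\alpha \mapsto \Theta_{Q, \alpha}(q)$. Your version merely spells out the positive-definiteness of $Q_i$, which the paper leaves implicit.
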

\begin{proof}
The claim for $\Theta_{Q, T_i}(q)$ is obvious 
since
\begin{align*}
\Theta_{Q, T_i}(q)=\Theta_{Q_i}(q), \quad 
Q_i(k_1, \cdots, k_n)=Q(m_i k_1, k_2, \cdots, k_n). 
\end{align*}
The claim for $\Theta_{Q, S_i}(q)$ follows from 
the claim for $\Theta_{Q, T_i}(q)$, Lemma~\ref{lem:cont} 
and the fact that (\ref{Qa}) is well-defined. 
\end{proof}

\subsection{Proof of Proposition~\ref{prop:rest}}
\begin{proof}
Let $\varphi(m)$
be the Euler function given by 
the order of 
$\mathrm{Gal}(\mathbb{Q}(\xi_m)/\mathbb{Q})=\left(\mathbb{Z}/m\mathbb{Z}\right)^{\ast}$. 
We write $m=m_i \cdot m_i'$ for $1\le i\le l$. 
Since $\Theta_{Q, m}(q)$ is invariant under 
$\xi_m \mapsto g(\xi_m)$ for any element 
$g\in \mathrm{Gal}(\mathbb{Q}(\xi_m)/\mathbb{Q})$, 
we have 
\begin{align*}
&\varphi(m)
\Theta_{Q, m}(q) \\
&=\sum_{(k_1, \cdots, k_n)\in \mathbb{Z}^n}
q^{Q(k_1, \cdots, k_n)}\sum_{g \in \mathrm{Gal}(\mathbb{Q}(\xi_m)/\mathbb{Q})}
g(\xi^{k_1}_m) \\
&= \sum_{i=1}^{l} \sum_{(k_1, \cdots, k_n) \in S_i}
q^{Q(k_1, \cdots, k_n)}\sum_{g \in \mathrm{Gal}(\mathbb{Q}(\xi_m)/\mathbb{Q})}
g(\xi^{k_1}_m) \\
&=\sum_{i=1}^{l}
\sum_{\begin{subarray}{c}
(k_1, \cdots, k_n) \in \mathbb{Z}^n \\
\mathrm{g.c.d.}(k_1, m_i')=1
\end{subarray}}
q^{Q(m_i k_1, k_2, \cdots, k_n)}
\sum_{g \in \mathrm{Gal}(\mathbb{Q}(\xi_m)/\mathbb{Q})}
g(\xi_{m_i'}^{k_1}) \\
&=\sum_{i=1}^{l}
\sum_{\begin{subarray}{c}
(k_1, \cdots, k_n) \in \mathbb{Z}^n \\
\mathrm{g.c.d.}(k_1, m_i')=1
\end{subarray}}
q^{Q(m_i k_1, k_2, \cdots, k_n)}
[\mathbb{Q}(\xi_m) : \mathbb{Q}(\xi_{m_i'})]
\sum_{g \in \mathrm{Gal}(\mathbb{Q}(\xi_{m_i'})/\mathbb{Q})}
g(\xi_{m_i'}^{k_1}). 
\end{align*}
Now the value
\begin{align*}
A_i \cneq [\mathbb{Q}(\xi_m) : \mathbb{Q}(\xi_{m_i'})]
\sum_{g \in \mathrm{Gal}(\mathbb{Q}(\xi_{m_i'})/\mathbb{Q})}
g(\xi_{m_i'}^{k_1})
\end{align*}
is an integer and independent of $k_1 \in \mathbb{Z}$
with $\mathrm{g.c.d.}(k_1, m_i')=1$. 
By setting $Q_i(k_1, \cdots, k_n)=Q(m_i k_1, k_2, \cdots, k_n)$, 
we obtain
\begin{align*}
\Theta_{Q, m}(q)=\frac{1}{\varphi(m)}\sum_{i=1}^{l} A_i 
\sum_{\begin{subarray}{c}
(k_1, \cdots, k_n) \in \mathbb{Z}^n \\
\mathrm{g.c.d.}(k_1, m_i')=1
\end{subarray}}
q^{Q_i(k_1, \cdots, k_n)}. 
\end{align*}
Therefore the result follows from Lemma~\ref{lem:both}. 
\end{proof}

\section{Appendix: Combinatorics on Quot schemes of points on $A_{n-1}$}
\label{sec:Append}
In this appendix,
we describe 
$\chi(\Quot^{m}(\oO_{A_{n-1}}(jD)))$
in terms of certain combinatorial data on Young diagrams. 
In what follows, we regard a Young diagram as a subset
in $\mathbb{Z}_{\ge 0}^{2}$ in the usual way, say:
\begin{align*}
Y=
\begin{array}{l}
\square\!\square\!
\\[-0.21cm]
\square\!\square\!\square
\end{array}
\ \Leftrightarrow \ 
\{(0, 0), (1, 0), (2, 0), (1, 0), (1, 1) \}. 
\end{align*}
Recall that there is a one to one correspondence 
between the set of ideals $I \subset \mathbb{C}[x, y]$
generated by monomials
and that of Young diagrams, by assigning $I$ with the Young diagram $Y_I$: 
\begin{align}\label{YI}
Y_I \cneq \{(a, b) \in \mathbb{Z}_{\ge 0}^2 :
x^a y^b \notin I\}. 
\end{align}
For a Young diagram $Y$, we introduce the following notation: 
\begin{align*}
&Y^{\rightarrow} \cneq \{Y+(1, 0)\} 
\cup \{\{0\} \times \mathbb{Z}_{\ge 0}\} \\
&Y^{\nearrow} \cneq \{Y+(1, 1)\} \cup \{\mathbb{Z}_{\ge 0} \times \{0\}\}
\cup \{\{0\} \times \mathbb{Z}_{\ge 0}\}. 
\end{align*}
Note that $Y^{\rightarrow}$ and $Y^{\nearrow}$
are Young diagrams with 
infinite number of 
blocks. See the following picture: 
\begin{align*}
Y=
\begin{array}{l}
\square\!\square\!
\\[-0.21cm]
\square\!\square\!\square
\end{array}
\ \Rightarrow \ 
Y^{\rightarrow}=
\begin{array}{l}
\vdots
\\[-0.10cm]
\square\!
\\[-0.21cm]
\square\!
\\[-0.21cm]
\square\!
\\[-0.21cm]
\square\!\square\!\square\!
\\[-0.21cm]
\square\!\square\!\square\!\square\!
\end{array} \ 
Y^{\nearrow}=
\begin{array}{l}
\vdots
\\[-0.10cm]
\square\!
\\[-0.21cm]
\square\!
\\[-0.21cm]
\square\!\square\!\square\!
\\[-0.21cm]
\square\!\square\!\square\!\square\!
\\[-0.21cm]
\square\!\square\!\square\!\square\!\square\!\square\!\cdots
\end{array}
\end{align*}
\begin{lem}\label{lem:Young}
For $0\le j\le n-1$, the 
number
 $\chi(\Quot^m(\oO_{A_{n-1}}(-jD)))$ coincides with the number of 
$n$-tuples of Young diagrams 
$(Y_0, Y_1, \cdots, Y_{n-1})$ satisfying 
\begin{align}\label{Young}
Y_{n-1} \subset \cdots \subset Y_j \subset 
Y_{j-1}^{\rightarrow} \subset \cdots \subset Y_0^{\rightarrow} \subset 
Y_{n-1}^{\nearrow}, 
 \quad 
\sum_{i=0}^{n-1} \lvert Y_i \rvert =m. 
\end{align}
\end{lem}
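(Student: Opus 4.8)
The plan is to compute $\chi(\Quot^m(\oO_{A_{n-1}}(-jD)))$ by torus localization and then translate the resulting lattice combinatorics into the language of nested Young diagrams. Recall that $A_{n-1}$ is the affine toric surface $\mathbb{C}^2/\mu_n$, where $\mu_n$ acts on $\mathbb{C}[u,v]$ with $u,v$ of weights $1,-1$, so that $R\cneq \mathbb{C}[A_{n-1}]=\mathbb{C}[u,v]^{\mu_n}$ is generated by $x=u^n$, $y=v^n$, $z=uv$ with the relation $xy=z^n$. Up to a twist by a Laurent monomial, the reflexive $R$-module $\Gamma(A_{n-1},\oO_{A_{n-1}}(-jD))$ is the weight space $M\cneq \mathbb{C}[u,v]_{(-j)}\subset\mathbb{C}[u,v]$ spanned by the monomials $u^a v^b$ with $a-b\equiv -j\pmod n$, and the $R$-module structure is multiplication by $x,y,z$, that is, the operations $u^a v^b\mapsto u^{a+n}v^b$, $u^a v^b\mapsto u^a v^{b+n}$, $u^a v^b\mapsto u^{a+1}v^{b+1}$ on exponents.

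The two-torus $T$ acting on $\mathbb{C}^2$ descends to $A_{n-1}$ and, once a $T$-equivariant structure is chosen on $\oO_{A_{n-1}}(-jD)$ (one exists since $D$ is a torus-invariant Weil divisor, and the choice affects nothing below), lifts to $\Quot^m(\oO_{A_{n-1}}(-jD))$. By iterating the $\mathbb{C}^*$-fixed point formula $\chi(Z)=\chi(Z^{\mathbb{C}^*})$ over a splitting of $T$, the Euler characteristic of this quasi-projective $T$-scheme equals that of its $T$-fixed locus. A $T$-fixed point is a quotient $\oO_{A_{n-1}}(-jD)\twoheadrightarrow Q$ whose kernel corresponds to a $\mathbb{Z}^2$-graded submodule of $M$, equivalently --- since each graded piece of $M\subset\mathbb{C}[u,v]$ is at most one-dimensional --- a submodule $N\subseteq M$ spanned by monomials. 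Hence
\begin{align*}
\chi(\Quot^m(\oO_{A_{n-1}}(-jD)))=\#\{\,N\subseteq M \text{ monomial $R$-submodule}:\dim_{\mathbb{C}}(M/N)=m\,\},
\end{align*}
and the right-hand side is finite, so its $\chi$ is simply its cardinality.

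It remains to count these monomial submodules. Write $a=np+i$ with $0\le i\le n-1$ and $p\ge 0$ by the division algorithm, and accordingly $b=nq+r_i$ with $r_i\cneq(i+j)\bmod n$ and $q\ge 0$, so that the monomials of $M$ are indexed by triples $(i,p,q)$. A monomial submodule $N$ is then recorded sheet by sheet as an $n$-tuple of finite Young diagrams $(Y_0,\dots,Y_{n-1})$ in the $(p,q)$-plane, where $Y_i$ is the complement on the $i$-th sheet of the exponent set of $N$: closure of $N$ under multiplication by $x$ and by $y$ is precisely the requirement that each $Y_i$ be a Young diagram, and $\dim_{\mathbb{C}}(M/N)=\sum_i\lvert Y_i\rvert$. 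The remaining point is that closure under multiplication by $z$, which sends $(i,p,q)$ to $(i+1,p,q)$ for most $i$ but produces a carry in $p$ when $i=n-1$ and a carry in $q$ when $i=n-1-j$ (indices read modulo $n$), becomes a chain of inclusions $Y_{i+1}\subseteq Y_i$ into which a single $(1,0)$-shift and a single $(0,1)$-shift are inserted exactly at those two carries; composing once around the $n$ sheets, the two shifts combine to the $(1,1)$-shift, as they must since $z^n=xy$. In terms of complements the condition $S+(1,0)\subseteq S'$ becomes $Y'\subseteq Y^{\rightarrow}$, the transposed shift behaves symmetrically, and the cyclic composition yields the inclusion into $Y_{n-1}^{\nearrow}$; after relabelling the index $i$ (and, if desired, transposing all diagrams, or invoking the automorphism $u\leftrightarrow v$ of $A_{n-1}$, which interchanges $\oO_{A_{n-1}}(-jD)$ and $\oO_{A_{n-1}}(jD)$) so that the two carries occur in the positions prescribed in (\ref{Young}), the conditions become precisely $Y_{n-1}\subseteq\cdots\subseteq Y_j\subseteq Y_{j-1}^{\rightarrow}\subseteq\cdots\subseteq Y_0^{\rightarrow}\subseteq Y_{n-1}^{\nearrow}$ together with $\sum_i\lvert Y_i\rvert=m$, which completes the count.

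The step I expect to be the real work is this last one: tracking the carries in both exponents under multiplication by $z$, matching the two special transitions with the operations $(-)^{\rightarrow}$ and $(-)^{\nearrow}$, and checking that the cyclic composition is the expected $(1,1)$-shift and that the final relabelling produces exactly the chain (\ref{Young}); everything else is formal. A reassuring consistency check is the case $n=1$, where the only surviving constraint is $Y_0\subseteq Y_0^{\nearrow}$, which holds automatically, so that one recovers $\chi(\Hilb^m(\mathbb{C}^2))=p(m)$.
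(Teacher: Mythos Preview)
Your proof is correct and follows the same strategy as the paper: torus localization reduces the Euler characteristic to a count of monomial submodules, which are then encoded by tuples of Young diagrams. The only difference is in coordinates. You work with the quotient presentation $A_{n-1}=\mathbb{C}^2/\mu_n$ and slice $M=\mathbb{C}[u,v]_{(-j)}$ into $n$ sheets by the residue of the $u$-exponent modulo $n$; the paper works with the hypersurface presentation and decomposes an ideal $I\subset\oO_{A_{n-1}}$ as $I=\bigoplus_{k=0}^{n-1} I_k\cdot z^k$ with $I_k\subset\mathbb{C}[x,y]$. These are the same decomposition: your sheet index $i$ and the paper's $z$-power index $k$ are related by $k\equiv i+j\pmod n$, under which your two carries land exactly at $k=j-1$ and $k=n-1$, the places where $(-)^{\rightarrow}$ and $(-)^{\nearrow}$ enter in (\ref{Young}). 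So the relabelling you defer is just this cyclic shift, and no transposition or appeal to $u\leftrightarrow v$ is needed. The paper's bookkeeping is slightly slicker because the containment $I\subset\oO_{A_{n-1}}(-jD)=(x,z^j)$ forces $I_k=I_k'\cdot(x)$ for $k<j$, so the $(-)^{\rightarrow}$ shifts appear directly from dividing out $x$ rather than as carries to be tracked and then realigned.
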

\begin{proof}
Giving a point in $\Quot^m(\oO_{A_{n-1}}(-jD))$
is equivalent to giving an ideal 
$I \subset \oO_{A_{n-1}}$ 
such that $I \subset \oO_{A_{n-1}}(-jD)$
and $\oO_{A_{n-1}}(-jD)/I$ is a $m$-dimensional $\mathbb{C}$-vector space. 
As a $\mathbb{C}$-vector space, we have the decomposition
\begin{align}\label{I:decompose}
I=\bigoplus_{k=0}^{n-1} I_k \cdot z^k
\end{align}
for sub vector spaces $I_k \subset \mathbb{C}[x, y]$. 
Since $I$ is an ideal in $\oO_{A_{n-1}}$, 
 each $I_k$ is an ideal in $\mathbb{C}[x, y]$. 
Moreover since $I$ must be closed under the multiplication by $z$, 
we have 
\begin{align}\label{I:seq}
xy I_{n-1} \subset I_0 \subset I_1 \subset \cdots \subset I_{n-1}. 
\end{align}
Since $\oO_{A_{n-1}}(-jD)=(x, z^j)$, the 
condition $I \subset \oO_{A_{n-1}}(-jD)$
is equivalent to 
$I_k \subset (x)$ for $0\le k \le j-1$. 
Hence for $0\le k\le j-1$, we have 
$I_k=I_k' \cdot (x)$ for some ideal 
$I_k' \subset \mathbb{C}[x, y]$. 
We obtain the sequence of ideals in 
$\mathbb{C}[x, y]$:
\begin{align}\label{seq:ideal}
I_0', \cdots, I_j', I_{j+1}, \cdots I_{n-1}. 
\end{align}
The condition 
that $\oO_{A_{n-1}}(-jD)/I$ 
is $m$-dimensional is equivalent to 
\begin{align}\label{sum=m}
\sum_{k=0}^{j-1}
\dim \mathbb{C}[x, y]/I_k' + 
\sum_{k=j}^{n-1}\dim \mathbb{C}[x, y]/I_k =m. 
\end{align}
Conversely suppose that we have
a sequence of ideals (\ref{seq:ideal})
in $\mathbb{C}[x, y]$ satisfying 
(\ref{sum=m}) and (\ref{I:seq})
for $I_k=I_k' \cdot (x)$ with $0\le k\le j-1$.
Then 
we obtain an ideal
$I \subset \oO_{A_{n-1}}$ by 
setting (\ref{I:decompose}),
which gives a point in $\Quot^m(\oO_{A_{n-1}}(-jD))$. 
Note that $T=(\mathbb{C}^{\ast})^{\times 2}$ acts 
on $A_{n-1}$ via $(t_1, t_2) \cdot (x, y, z)=(t_1^n x, t_2 ^n y, t_1 t_2 z)$, 
and the ideal (\ref{I:decompose}) is $T$-fixed if and only if 
the corresponding 
ideals in (\ref{seq:ideal})
are generated by monomials.
Therefore the $T$-fixed locus of 
$\Quot^m(\oO_{A_{n-1}}(-jD))$
is identified with 
 the set of $n$-tuples of Young diagrams $(Y_0, \cdots, Y_n)$
satisfying (\ref{Young}),
by assigning a sequence (\ref{seq:ideal})
with 
\begin{align*}
(Y_0, \cdots, Y_n)=
(Y_{I_0'}, \cdots, Y_{I_{j-1}'}, 
Y_{I_j}, \cdots, Y_{I_{n-1}})
\end{align*} 
as in (\ref{YI}). By the $T$-localization, 
we obtain the desired result. 
\end{proof}

\begin{rmk}
The number $\chi(\Quot^m(\oO_{A_{n-1}}(-jD)))$ in 
Lemma~\ref{lem:Young} and the coefficients in 
the LHS of 
(\ref{Quotm}) are related by 
\begin{align}\label{relate:chi}
\chi(\Quot^m(\oO_{A_{n-1}}(-jD)))=
\chi(\Quot^m(\oO_{A_{n-1}}((n-j)D)))
\end{align}
as $\oO_{A_{n-1}}(nD) \cong \oO_{A_{n-1}}$. 
\end{rmk}
We compare the formula (\ref{Quotm})
with the numbers of $n$-tuples of Young 
diagrams in Lemma~\ref{lem:Young}
in examples: 
\begin{exam}
(i) If $n=2$ and $j=0$, then the formula (\ref{Quotm}) 
implies 
\begin{align*}
\sum_{m\ge 0} \chi(\Hilb^m(A_1))q^m
= 1+ q+3q^2 + 5q^3 + 9q^4 + 14q^5 + \cdots.
\end{align*}
For instance, $\chi(\Hilb^5(A_1))$
corresponds to the following 14 pairs of Young diagrams $(Y_0, Y_1)$:  
\begin{align*}
&\left(
\begin{array}{l}
\square\!
\\[-0.21cm]
\square\!
\\[-0.21cm]
\square\!
\\[-0.21cm]
\square\!
\\[-0.21cm]
\square\!
\end{array}, \emptyset
\right), \ 
\left(
\begin{array}{l}
\square\!
\\[-0.21cm]
\square\!
\\[-0.21cm]
\square\!
\\[-0.21cm]
\square\!\square\!
\end{array}, \emptyset
\right), \ 
\left(
\begin{array}{l}
\square\!
\\[-0.21cm]
\square\!
\\[-0.21cm]
\square\!\square\!\square\!
\end{array}, \emptyset
\right), \ 
\left(
\begin{array}{l}
\square\!
\\[-0.21cm]
\square\!\square\!\square\!\square\!
\end{array}, \emptyset
\right), \\
&\left(
\begin{array}{l}
\square\!\square\!\square\!\square\!\square\!
\end{array}, \emptyset
\right), \
\left(
\begin{array}{l}
\square\!
\\[-0.21cm]
\square\!
\\[-0.21cm]
\square\!
\\[-0.21cm]
\square\!
\end{array}, 
\begin{array}{l}
\empty
\\[-0.21cm]
\empty
\\[-0.21cm]
\empty
\\[-0.21cm]
\square\!
\end{array}
\right), \ 
\left(
\begin{array}{l}
\square\!
\\[-0.21cm]
\square\!
\\[-0.21cm]
\square\!\square\!
\end{array}, 
\begin{array}{l}
\empty
\\[-0.21cm]
\empty
\\[-0.21cm]
\square\!
\end{array}
\right), \ 
\left(
\begin{array}{l}
\square\!\square\!
\\[-0.21cm]
\square\!\square\!
\end{array}, 
\begin{array}{l}
\empty
\\[-0.21cm]
\square\!
\end{array}
\right), \\
&\left(
\begin{array}{l}
\square\!
\\[-0.21cm]
\square\!\square\!\square\!
\end{array}, 
\begin{array}{l}
\empty
\\[-0.21cm]
\square\!
\end{array}
\right), \
\left(
\begin{array}{l}
\square\!\square\!\square\!\square\!
\end{array}, 
\begin{array}{l}
\square\!
\end{array}
\right), \
\left(
\begin{array}{l}
\square\!
\\[-0.21cm]
\square\!
\\[-0.21cm]
\square\!
\end{array}, 
\begin{array}{l}
\empty
\\[-0.21cm]
\square\!
\\[-0.21cm]
\square\!
\end{array}
\right), \
\left(
\begin{array}{l}
\square\!
\\[-0.21cm]
\square\!\square\!
\end{array}, 
\begin{array}{l}
\square\!
\\[-0.21cm]
\square\!
\end{array}
\right), \\
&\left(
\begin{array}{l}
\square\!
\\[-0.21cm]
\square\!\square\!
\end{array}, 
\begin{array}{l}
\empty
\\[-0.21cm]
\square\!\square\!
\end{array}
\right), \
\left(
\begin{array}{l}
\square\!\square\!\square\!
\end{array}, 
\begin{array}{l}
\square\!\square\!
\end{array}
\right).
\end{align*}

(ii) If $n=2$ and $j=1$, then (\ref{Quotm})
and (\ref{relate:chi})
yield
\begin{align*}
\sum_{m\ge 0} \chi(\Quot^m(\oO_{A_1}(-D)))q^m
= 1+ 2q+3q^2 + 6q^3 + 10q^4 + 16q^5 + \cdots.
\end{align*}
Similarly to (i), $\chi(\Quot^5(\oO_{A_1}(-D)))$
corresponds to the following 16 pairs of Young diagrams $(Y_0, Y_1)$:  
\begin{align*}
&\left(
\begin{array}{l}
\square\!\square\!\square\!\square\!\square\!
\end{array}, \emptyset
\right), \ 
\left(
\begin{array}{l}
\square\!\square\!\square\!\square\!
\end{array}, 
\begin{array}{l}
\square\!
\end{array}
\right), \ 
\left(
\begin{array}{l}
\square\!
\\[-0.21cm]
\square\!\square\!\square\!
\end{array}, 
\begin{array}{l}
\empty
\\[-0.21cm]
\square\!
\end{array}
\right), \ 
\left(
\begin{array}{l}
\square\!\square\!\square\!
\end{array}, 
\begin{array}{l}
\square\!\square\!
\end{array}
\right), \\
&\left(
\begin{array}{l}
\square\!
\\[-0.21cm]
\square\!\square\!
\end{array}, 
\begin{array}{l}
\empty
\\[-0.21cm]
\square\!\square\!
\end{array}
\right), \ 
\left(
\begin{array}{l}
\square\!
\\[-0.21cm]
\square\!
\\[-0.21cm]
\square\!
\end{array}, 
\begin{array}{l}
\empty
\\[-0.21cm]
\square\!
\\[-0.21cm]
\square\!
\end{array}
\right), \
\left(
\begin{array}{l}
\square\!
\\[-0.21cm]
\square\!\square\!
\end{array}, 
\begin{array}{l}
\square\!
\\[-0.21cm]
\square\!
\end{array}
\right), \
\left(
\begin{array}{l}
\empty
\\[-0.21cm]
\square\!\square\!\square\!
\end{array}, 
\begin{array}{l}
\square\!
\\[-0.21cm]
\square\!
\end{array}
\right), \\
&\left(
\begin{array}{l}
\empty 
\\[-0.21cm]
\empty
\\[-0.21cm]
\square\!\square\!
\end{array}, 
\begin{array}{l}
\square\!
\\[-0.21cm]
\square\!
\\[-0.21cm]
\square\!
\end{array}
\right), \ 
\left(
\begin{array}{l}
\empty
\\[-0.21cm]
\square\!\square\!
\end{array}, 
\begin{array}{l}
\square\!
\\[-0.21cm]
\square\!\square\!
\end{array}
\right), \
\left(
\begin{array}{l}
\square\!\square\!
\end{array}, 
\begin{array}{l}
\square\!\square\!\square\!
\end{array}
\right), \
\left(
\begin{array}{l}
\empty
\\[-0.21cm]
\square\! 
\\[-0.21cm]
\square\!
\end{array}, 
\begin{array}{l}
\square
\\[-0.21cm]
\square\! 
\\[-0.21cm]
\square\!
\end{array}
\right), \\ 
&\left(
\begin{array}{l}
\square\! 
\\[-0.21cm]
\square\!
\end{array}, 
\begin{array}{l}
\square\! 
\\[-0.21cm]
\square\!\square\!
\end{array}
\right), \
\left(
\begin{array}{l}
\empty
\\[-0.21cm]
\empty
\\[-0.21cm]
\empty
\\[-0.21cm]
\square\!
\end{array}, 
\begin{array}{l}
\square\!
\\[-0.21cm]
\square\!
\\[-0.21cm]
\square\!
\\[-0.21cm]
\square\!
\end{array}
\right), \
\left(
\begin{array}{l}
\empty
\\[-0.21cm]
\empty
\\[-0.21cm]
\square\!
\end{array}, 
\begin{array}{l}
\square\!
\\[-0.21cm]
\square\!
\\[-0.21cm]
\square\!\square\!
\end{array}
\right), \
\left(
\emptyset, 
\begin{array}{l}
\square\!
\\[-0.21cm]
\square\!
\\[-0.21cm]
\square\!
\\[-0.21cm]
\square\!
\\[-0.21cm]
\square\!
\end{array}
\right).
\end{align*}
\end{exam}

\providecommand{\bysame}{\leavevmode\hbox to3em{\hrulefill}\thinspace}
\providecommand{\MR}{\relax\ifhmode\unskip\space\fi MR }
% \MRhref is called by the amsart/book/proc definition of \MR.
\providecommand{\MRhref}[2]{%
  \href{http://www.ams.org/mathscinet-getitem?mr=#1}{#2}
}
\providecommand{\href}[2]{#2}

Kavli Institute for the Physics and 
Mathematics of the Universe, University of Tokyo,
5-1-5 Kashiwanoha, Kashiwa, 277-8583, Japan.

\textit{E-mail address}: yukinobu.toda@ipmu.jp

\end{document}